\title{}
\author{}
\newtheorem{theorem}{Theorem}[section]
\newtheorem{lemma}[theorem]{Lemma}
\newtheorem{proposition}[theorem]{Proposition}
\newtheorem*{thm*}{Theorem}
\newtheorem{rem}[theorem]{Remark}
\theoremstyle{definition}
\newcommand{\CF}{\widehat{CF}}
\newcommand{\HF}{\widehat{HF}}
\newcommand{\x}{{\bf x}}
\newcommand{\D}{\mathcal{D}}
\newcommand{\A}{\mathbb{A}}
\newcommand{\Z}{\mathbb{Z}}
\newcommand{\R}{\mathbb{R}}
\newcommand{\Q}{\mathbb{Q}}
\newcommand\goth[1]{\mathfrak{#1}}
\newcommand{\s}{\goth{s}}
\newcommand{\rank}{\text{rank}}
\newcommand{\Ker}{\mathrm{Ker}}
\newcommand{\Image}{\mathrm{Im}}
\begin{document}

\author{Zhongtao Wu}
\title{Perturbed Floer Homology of some fibered three manifolds}

\maketitle

\begin{abstract}
In this paper, we write down a special Heegaard diagram for a given product three manifold $\Sigma_g\times S^1$.
We use the diagram to compute its perturbed Heegaard Floer homology. 
\end{abstract}

\section{Introduction}

Heegaard Floer homology was introduced by Ozsv\'ath and Szab\'o in \cite{OSzAnn1},\cite{OSzAnn2}, and proved to be a powerful 3-manifold invariant.  The construction of the invariant requires an admissibility condition though, which in general is not met by those ``simplest'' Heegaard diagrams for a given 3-manifold $Y$ with $b_1(Y)\geq 1$.  A variant of the construction using Novikov ring overcomes this shortcoming, and in some sense embraces the ordinary homology as a special case.  The invariants, usually called \textit{perturbed} Heegaard Floer homology, proved to be useful in some situations.  For example, Jabuka and Mark made use of them in calculating Ozsv\'ath-Szab\'o invariants for certain closed 4-manifolds \cite{JM}.

This paper is aimed to compute the perturbed Heegaard Floer homologies for product three manifolds $\Sigma_g \times S^1$.  The result is a little bit surprising as we find that the homology groups are independent of the exact direction of perturbations. 

This paper is organized as follows: In section 2, we review the backgrounds of Novikov ring $\A$ and the perturbed Heegaard Floer homology.  Treating homology groups as $\A$-vector spaces, we prove a rank inequality and an Euler characteristic identity.  In section 3, we write down a special Heegaard diagram for $T^3$, and compute its perturbed Heegaard Floer homology.  Very similar argument can be applied to arbitrary torus bundles.  In section 4, we compute the homology for nontorsion Spin$^c$ structure of $\Sigma_g\times S^1$. 

\subsection*{Acknowledgment.} I would like to thank my advisor, Zolt\'an Szab\'o, for suggesting me the problem and having many helpful discussions at various points.  I am also grateful to Yinghua Ai, Joshua Greene and Yi Ni for conversations about this work.

\section{Preliminaries on Perturbed Heegaard Floer homology}

In Ozsv\'ath and Szab\'o \cite[section 11]{OSzAnn1}, they sketch a variant of Heegaard Floer homologies analogous to the perturbed version of Seiberg-Witten Floer homology.  For the construction, we work over the\textit{ Novikov ring} $\A$ (which is in fact a field) consisting of formal power series $\sum_{r\in \R} a_r T^r$, for which $a_r \in \Z_2$ and $\#\{a_r|a_r\neq 0 , r<N \} <\infty$ for any $N\in \R$, endowed with the multiplication law: 
$$(\sum_{r \in \R} a_r T^r) \cdot (\sum_{r \in \R} b_r T^r)= \sum _{r \in \R} (\sum_s a_sb_{r-s})T^r.$$

For a pointed Heegaard diagram $(\Sigma,\alpha, \beta,z)$ for $Y$, define the boundary map $\partial$ by
$$\partial^+[x,i]=\sum_{y} (\sum_{\{\phi\in \pi_2(x,y) | n_z(\phi)\leq i\}} \#\widehat{\mathcal{M}}(\phi) T^{\mathcal{A}(\phi)}\cdot [y,i-n_z(\phi)]),$$
where $\mathcal{A}(\phi)$ denotes the area of the domain $\mathcal{D}(\phi)$.  This construction depends on the area of each periodic domain, which can be thought of as a real two-dimensional cohomology class $\eta \in H^2(Y; \R)$.  And it is shown that the corresponding homology groups, denoted by $HF^+(Y,\s;\eta)$, are invariants of the underlying topological data only. 

It is a natural question to ask for an explicit dependence of $HF^\circ(Y;\eta)$ on $\eta$.  We are not quite achieving this yet, but our result provides a bound for the rank of $\HF(Y;\eta)$ as a vector space over $\A$.  More precisely, it is bounded by $\HF(Y;\omega)$ and $\HF(Y; \Omega)$ for two very special cohomology class $[\omega]$ and $[\Omega]$, where $[\omega]$ is a \textit{generic} class in the sense that $\omega(\mathcal{D})\neq 0$ for any integral periodic domain $\mathcal{D}$; and $\Omega$ is a \textit{trivial} class, i.e. $\Omega(\mathcal{D})=0$ for any periodic domain $\mathcal{D}$.

\begin{proposition}
\label{main prop}
\begin{enumerate}

 \item The rank of $\HF(Y;\Omega)$ over $\A$ is the same as the rank of the ordinary unperturbed $\HF(Y)$ over $\Z_2$.

\item The rank of $\HF(Y;\omega)$ over $\A$ is the same as the rank of the non-torsion part of the completely twisted $\underline{\HF}(Y;\Z_2[H^1(Y;\Z)])$ over the ring $\Z[H^1(Y;\Z)]$. 

\item In general, we have a rank inequality: $$\rank \HF(Y;\omega)\leq \rank\HF(Y;\eta) \leq \rank\HF(Y;\Omega)$$

\end{enumerate}
 
\end{proposition}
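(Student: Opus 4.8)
The plan is to recognize the perturbed complex as a base change of the completely twisted complex and then reduce everything to elementary linear algebra over the field $\A$. Write $R=\Z_2[H^1(Y;\Z)]$ for the group ring over which the completely twisted complex $\underline{\CF}(Y)$ is defined; this is a finite complex of free $R$-modules. Each class $\eta$ determines a ring homomorphism $\rho_\eta\colon R\to\A$ sending the group element $t^h$ to the monomial $T^{\langle\eta,h\rangle}$, where $\langle\eta,h\rangle$ is the area pairing recording $\eta$ on the periodic domain associated to $h$. I would first establish the identification
\[
\CF(Y;\eta)\;\cong\;\underline{\CF}(Y)\otimes_R\A,
\]
the tensor product taken along $\rho_\eta$, under which the weight $T^{\mathcal{A}(\phi)}$ attached to a disk matches $\rho_\eta$ applied to the group-ring coefficient of the twisted differential. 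Granting this, parts (1) and (2) are the two extreme specializations: $\rho_\Omega$ is the augmentation $t^h\mapsto 1$, which factors through $\Z_2\hookrightarrow\A$, so $\HF(Y;\Omega)\cong\HF(Y)\otimes_{\Z_2}\A$; while $\rho_\omega$ is injective, so it factors through the fraction field $F=\on{Frac}(R)$, giving $\HF(Y;\omega)\cong\underline{\HF}(Y)\otimes_R F\otimes_F\A$, whose $\A$-rank is exactly the non-torsion rank of $\underline{\HF}(Y)$ over $R$.

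For the inequality (3), the main tool is the rank formula $\rank_K H_*(C\otimes_R K)=\sum_i n_i-2\sum_i\rank_K\partial_i$, where $n_i=\rank_R C_i$; it shows that the total homology rank is monotone decreasing in each boundary rank. Since the determinant of any square submatrix commutes with the ring homomorphisms, $\rank_K\rho(\partial_i)$ is governed by which minors $m\in R$ of $\partial_i$ survive under $\rho$. I would then record two elementary observations, both special to $\Z_2$ coefficients. First, because $\omega$ is generic, distinct group elements receive distinct exponents, so no $\Z_2$-cancellation can occur and $\rho_\omega$ is injective; hence a minor $m$ is nonzero iff $\rho_\omega(m)\neq0$, giving $\rank_\A\rho_\omega(\partial_i)\geq\rank_\A\rho_\eta(\partial_i)$ for every $\eta$ and every $i$. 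Second, if the augmentation $\epsilon(m)=\sum_h c_h\in\Z_2$ is nonzero then $m$ has an odd number of monomials, so for at least one exponent value the monomials at that level cannot cancel and $\rho_\eta(m)\neq0$; hence $\rank_\A\rho_\eta(\partial_i)\geq\rank_{\Z_2}\epsilon(\partial_i)=\rank_\A\rho_\Omega(\partial_i)$.

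Assembling these, for every $i$ one has $\rank\partial_i^\omega\geq\rank\partial_i^\eta\geq\rank\partial_i^\Omega$, and the rank formula yields $\rank\HF(Y;\omega)\leq\rank\HF(Y;\eta)\leq\rank\HF(Y;\Omega)$, which is (3), with the two endpoints being (2) and (1). As a consistency check I would note that the ring automorphism $\sigma_s\colon T^r\mapsto T^{sr}$ for $s>0$ intertwines $\rho_{s\eta}$ with $\rho_\eta$, so the rank depends only on the ray of $\eta$ and jumps only at the degenerate class $\Omega$, matching the heuristic that perturbation can create cancellations but never new homology.

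The subsequent rank count is routine; I expect the genuine obstacle to be the base-change identification itself, namely matching the analytically defined weight $T^{\mathcal{A}(\phi)}$ with $\rho_\eta$ of the combinatorial group-ring coefficient of the twisted differential. This requires choosing reference disks $\phi_0(\x,\y)$ for each pair of generators, absorbing the overall normalizations $T^{\mathcal{A}(\phi_0)}$ (units in $\A$) by rescaling the generators $[\x]$, and checking compatibility with the splitting into $\Spinc$ structures together with the periodic-domain bookkeeping. Characteristic $2$ is essential in the parity step of the second observation, so the argument as planned is specific to $\Z_2$ coefficients.
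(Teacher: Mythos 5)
Your proposal is correct and follows essentially the same route as the paper: both arguments reduce everything, via the rank formula $\rank \HF = \dim \CF - 2\rank \partial$, to deciding which minors of the twisted boundary matrix over $\Z_2[H^1(Y;\Z)]$ survive the three specializations, and your two key observations (injectivity of $\rho_\omega$ from genericity, and the augmentation/odd-parity argument for why $\epsilon(m)\neq 0$ forces $\rho_\eta(m)\neq 0$ in characteristic $2$) are precisely rigorous renderings of the paper's term-pairing argument showing $D(\omega)=0 \Rightarrow D(\eta)=0 \Rightarrow D(\Omega)=0$. Your base-change/localization packaging of parts (1) and (2) — $\rho_\Omega$ as the augmentation and $\rho_\omega$ factoring through the fraction field — is a cleaner formulation of what the paper does by hand (replacing each $T^\phi$ by $1$, and noting $\rank M = \rank M(\omega)$), but the mathematical content is the same.
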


The proof is based on the following simple fact from linear algebra: 

\begin{lemma}\label{l1}
 The rank of a matrix $M$ is the largest integer $n$ such that there exists some $n\times n$ minor of $M$ with non-zero determinant. 
\end{lemma}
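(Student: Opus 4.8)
The plan is to prove the two inequalities $n \le \rank M$ and $\rank M \le n$ separately, where $n$ denotes the largest size of a square submatrix of $M$ with nonzero determinant. Throughout I regard $M$ as a matrix over the Novikov field $\A$ and interpret $\rank M$ as the dimension of its column space; write $r = \rank M$.

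First I would establish $n \le r$. Suppose some $n\times n$ submatrix $N$ of $M$ has nonzero determinant. Then the $n$ columns of $N$ are linearly independent over $\A$. These columns are obtained from $n$ of the columns of $M$ by deleting all but a fixed set of $n$ rows. Any linear dependence among the corresponding full columns of $M$ would restrict to a linear dependence among the columns of $N$, which is impossible. Hence those $n$ columns of $M$ are themselves linearly independent, giving $r \ge n$.

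For the reverse inequality $r \le n$, I would start from a set of $r$ linearly independent columns of $M$ and let $M'$ be the submatrix consisting of exactly those $r$ columns together with all rows of $M$. Then $M'$ has column rank $r$, and since row rank equals column rank over a field, $M'$ has $r$ linearly independent rows as well. Selecting those rows yields an $r\times r$ submatrix of $M$ whose rows are linearly independent, hence one that is invertible with nonzero determinant. This exhibits a nonvanishing $r\times r$ minor, so $n \ge r$; combined with the first bound this gives $n = r$.

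The only point requiring care — and the closest thing to an obstacle — is the reverse inequality, where one must pass from full column rank to the existence of a genuinely square nonsingular block; this rests on the equality of row and column rank, which holds precisely because $\A$ is a field. Everything else is a direct manipulation of linear (in)dependence and is valid verbatim over $\A$.
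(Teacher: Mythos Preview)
Your argument is correct: both inequalities are proved by the standard device, and the only substantive input --- equality of row and column rank over a field --- is exactly what is needed and is available since $\A$ is a field.

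The paper itself does not supply a proof of this lemma; it is introduced as a ``simple fact from linear algebra'' and used as a black box in the proof of Proposition~\ref{main prop}. So there is no approach to compare against: your write-up simply fills in a detail the paper deliberately omits, and does so correctly.
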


Note that lemma \ref{l1} provides us an algorithm to compute the rank of homology: choose a basis for the vector space $\CF$, and write the boundary map $\partial$ in a matrix form $M$.  By definition, $\HF=\frac{\Ker M}{\Image M}$ 
and $\dim(\Ker M)+\text{dim}(\Image M)=\text{dim}\CF $, 
so $$\rank\HF=\dim\Ker M-\dim \Image M=\dim \CF -2 \rank M$$

 In other words, in order to find the rank of $\HF$, it suffices to find the rank of $M$, which in turn is completely determined by the determinants of all its minors. 

Both $\A$ and $\Z_2[H^1(Y; \Z)]$ consist of formal power series as their elements - this is a special property we are going to employ in deciding if a determinant is zero.  More specifically, for a matrix $(M_{ij})=(T^{\phi_{ij}}) \in \text{Mat}(\Z_2[H^1(Y;\Z)])$, $$\det M= \sum _{\{\sigma_1,\sigma_2,\cdots, \sigma_n\}=\{1,2,\cdots,n\}} T^{\phi_{1\sigma_1}+\phi_{2\sigma_2}+\cdots+\phi_{n\sigma_n}}.$$  Being a formal sum, terms can't be added unless their exponents are equal.  Hence, $\det M=0$ iff we can pair all the  terms in the summand and cancel each other out.  More formally, we find $n!/2$ pairs, where within each pair of permutations $\sigma$ and $\rho$ we have $T^{\phi_{1\sigma_1}+\phi_{2\sigma_2}+\cdots+\phi_{n\sigma_n}}=T^{\phi_{1\rho_1}+\phi_{\rho_2}+\cdots+\phi_{n\rho_n}}$, or equivalently: 

 $$\phi_{1\sigma_1}+\phi_{2\sigma_2}+\cdots+\phi_{n\sigma_n}=\phi_{1\rho_1}+\phi_{\rho_2}+\cdots+\phi_{n\rho_n}$$

In general, entries of $M$ don't have to be monomials like $T^{\phi_{ij}}$; some entries could be like $T^{\phi_{ij}^{1}+\phi_{ij}^2+\cdots}$ and some might even vanish.  These happen when there are more than two holomorphic disks or no disk connecting two generators at all.  Nonetheless, we are still able to write the determinants as sums of the products of entries, and whether $\det=0$ or not still depends on the existence of the pairing aforementioned.  While finding exactly the pairing could be difficult, we will only apply the following simple philosophy:``the more terms in the summand are equal, the more likely the sum is zero.''  This philosophy is only valid in those fields with characteristic 2 and whose elements are formal sums.  Fortunately, that is so for $\A$ and $\Z_2[H^1(Y; \Z)]$.

\begin{proof}[ proof of Proposition \ref{main prop}]

Fix an admissible diagram for $Y$, and find all generators $x_i \in \CF(Y)$.  If the boundary map is given by 
$$\partial x_i= \sum_j(\sum_{\phi\in \pi_2(x_i,x_j)}\# \widehat{\mathcal{M}}(\phi) T^\phi x_j),$$ 
construct the corresponding matrix $(M_{ij})=(\sum_{\phi\in \pi_2(x_i,x_j)}\# \widehat{\mathcal{M}}(\phi) T^\phi)$. Since $M_{ij}\in \Z_2[H^1(Y;\Z)]$, it can be evaluated with respect to a given two form $\eta$, producing a matrix $(M_{ij}(\eta))=(\sum_{\phi\in \pi_2(x_i,x_j)}\# \widehat{\mathcal{M}}(\phi) T^{\eta(\phi) }) \in \text{Mat}(\A)$.  

Take an arbitrary $k\times k $ minor of $M$, and compute its determinant. Denote this function by $D$, then the corresponding determinant of $M(\eta)$ is $D(\eta)$.  As explained earlier, we want to find the likelihood for $D(\eta)=0$.  For each pair of terms, we want to check
 $$\eta((\phi_{1\rho_1}+\cdots+\phi_{k\rho_k})-(\phi_{1\sigma_1}+\cdots+\phi_{k\sigma_k}))=0.$$

Denote $(\phi_{1\rho_1}+\cdots+\phi_{k\rho_k})-(\phi_{1\sigma_1}+\cdots+\phi_{k\sigma_k})$ by $\phi$. There are two possibilities: either $\phi=0$ or $\phi \neq 0$.  Note that $\phi_{i\rho_i}$ (resp. $\phi_{i\sigma_i}$) is a holomorphic disk connecting $x_i$ and $x_{\rho_i}$ (resp. $\x_{\sigma_i}$), so $(\phi_{1\rho_1}+\cdots+\phi_{n\rho_n})-(\phi_{1\sigma_1}+\cdots+\phi_{n\sigma_n})$ corresponds to a periodic domain in $\pi_2(x_1,x_1)$. Hence, by assumption, $\Omega(\phi)=0$, $\omega(\phi)\neq 0$ when $\phi \neq 0$, while $\eta(\phi)$ may or may not be zero.  

In other words, when we write $D(\Omega)$ as a formal sum, all terms are identical.  For $D(\omega)$, none of them are identical unless they are identical in $D$ already in the first place.  For a general $D(\eta)$, the bigger  the kernel of $\eta$ is, the more terms in the summand are equal.  Therefore, $D(\omega)=0$ implies $D(\eta)=0$ and $D(\eta)=0$ implies $D(\Omega)=0$; but not the other way around.  Apply lemma \ref{l1}, we obtain part (3) of our proposition.   

When $\phi \neq 0$, $\omega(\phi) \neq 0$, so $D(\omega)$ equals zero iff $D$ equals zero.  This implies $\rank M = \rank M(\omega)$, proving part (2).  

Since all terms in $D(\Omega)$ are identical, we may replace all $T^\phi$ by 1, and denote the corresponding matrix by $M(0)$.  Then, $D(\Omega)=0$ iff $D(0)=0$, so $\rank M(\Omega)=\rank M(0)$.  Observe that $M(0)$ corresponds to the boundary map for the ordinary unperturbed $HF(Y)$, this proves part (1). 

\end{proof}

\begin{rem}
 
It is implied in the course of the proof that $HF(Y,\eta)$ does in fact depend only on the intersection $\Ker \eta \cap PD$ of all integral periodic domains.  This is a fact that we will repeatedly use throughout the paper.  

\end{rem}

Similar results hold for $HF^+$ in a non-torsion Spin$^c$ structure $\s$:

\begin{proposition}
\label{main prop+}
\begin{enumerate}

\item When $\s$ is a non-torsion Spin$^c$ structure, $HF^+(Y,s;\eta)$ is finitely generated, and the Euler characteristic $$ \chi(HF^+(Y,\s;\eta))=\chi(HF^+(Y,s))=\pm\tau_t(Y,\s),$$ where $\tau_t$ is Turaev's torsion function, with respect to the component $t$ of $H^2(Y;\R)-0$ containing $c_1(\s)$. 

 \item The rank of $HF^+(Y,\s ;\Omega)$ over $\A$ is equal to the rank of the ordinary unperturbed $HF^+(Y,\s)$ over $\Z_2$.

\item The rank of $HF^+(Y;\omega)$ over $\A$ is equal to the rank of the non-torsion part of the completely twisted $\underline{HF^+}(Y;\Z_2[H^1(Y;\Z)])$ over the ring $\Z[H^1(Y;\Z)]$. 

\item In general, as $\A$-vector spaces, we have the inequality $$\rank HF^+(Y,\s ;\omega)\leq \rank HF^+(Y,\s ;\eta) \leq \rank HF^+(Y,\s ;\Omega)$$

\end{enumerate}
 
\end{proposition}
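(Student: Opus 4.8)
The plan is to handle the four assertions in two groups. Parts (2), (3) and (4) are the $HF^+$-analogues of the three parts of Proposition \ref{main prop}, and I would establish them by re-running the linear-algebra argument of that proof, once the infinite generation of $CF^+$ is dealt with. Part (1) is the genuinely new statement; its proof divides into proving finite generation of $HF^+(Y,\s;\eta)$ and then identifying its Euler characteristic.

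For (2)--(4), fix an admissible pointed diagram and write the differential on $CF^+(Y,\s)$ as a matrix $(M_{ij})$ with entries assembled from $\#\widehat{\mathcal{M}}(\phi)T^\phi$ exactly as before, now also recording the $U$-shift $n_z(\phi)$. Within any minor determinant, the difference of the exponent coming from one permutation and that coming from another is again a periodic domain, so the remark following Proposition \ref{main prop} applies: the evaluation $D(\eta)$ depends only on $\Ker\eta\cap\PD$. The ``more equal terms, the more likely the sum vanishes'' philosophy then yields $D(\omega)=0\Rightarrow D(\eta)=0\Rightarrow D(\Omega)=0$, giving the inequality (4) and recovering (2) at $\Omega$ (the untwisted theory) and (3) at $\omega$ (the totally twisted theory). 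The only new point is that $CF^+$ is infinitely generated over $\A$, so Lemma \ref{l1} cannot be applied to $M$ directly; I would remove this by invoking the finiteness from part (1). Since $HF^+(Y,\s;\eta)$ is then finite-dimensional, its rank is recovered from the finite-dimensional subcomplexes $\Ker U^{N}\subset CF^+$ (a subcomplex because $n_z(\phi)\ge 0$ forces $\partial$ to preserve $\Ker U^{N}$) for $N$ large, and on such a truncation the minor argument runs verbatim.

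For part (1), I would first prove $HF^\infty(Y,\s;\eta)=0$ when $c_1(\s)$ is non-torsion, the perturbed counterpart of the Ozsv\'ath--Szab\'o vanishing theorem; this both forces $HF^+(Y,\s;\eta)$ to be finitely generated over $\A$ and supplies the truncation used above. For the Euler characteristic I would use that, computed from a finite chain model, $\chi$ is the alternating sum of the ranks of the chain groups and hence is insensitive to the differential, so that $\chi(HF^+(Y,\s;\eta))=\chi(HF^+(Y,\s))$; the latter equals $\pm\tau_t(Y,\s)$ by the theorem of Ozsv\'ath--Szab\'o. I expect the main obstacle to be the perturbed vanishing $HF^\infty(Y,\s;\eta)=0$: in the untwisted case this rests on the module structure of $HF^\infty$ over the exterior algebra on $H_1(Y)$, and I would need to verify that this structure, together with the resulting vanishing for non-torsion $c_1(\s)$, survives passage to Novikov coefficients (equivalently, that evaluating against $\eta$ does not revive the $U^{-1}$-tower). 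Once this is in place, the finiteness and the Euler-characteristic identity follow formally, and parts (2)--(4) reduce to the computation already carried out for Proposition \ref{main prop}.
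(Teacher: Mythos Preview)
Your proposal is correct and follows the same route as the paper: part (1) is handled by carrying over the Ozsv\'ath--Szab\'o argument from \cite[section 5]{OSzAnn2} (finite generation via vanishing of $HF^\infty$ in non-torsion $\Spinc$ structures, then the Euler-characteristic identification with Turaev torsion), and parts (2)--(4) follow by rerunning the minor-determinant argument of Proposition~\ref{main prop} once finiteness is in hand. The paper's proof is just a two-sentence pointer to these facts, whereas you have spelled out the truncation $\Ker U^N$ and the $HF^\infty=0$ step explicitly; this is the same argument, only more fully unpacked.
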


\begin{proof}
 The first part is proved by a similar argument as in Ozsv\'ath and Szab\'o \cite[section 5]{OSzAnn2}.  And as soon as we know $HF^+(Y,s;\eta)$ finitely generated, the argument in the proof of Proposition \ref{main prop} can be adopted to prove the remaining parts.

\end{proof}

\section{Computations of $T^3$}

In this section, we compute the perturbed Heegaard Floer homology for $T^3$.  It was shown in Ozsv\'ath and Szab\'o \cite[section 8.4]{OSzAbsGr} that $\HF(T^3) \cong H^2(T^3; \Z) \oplus H^1(T^3; \Z)$.  By Proposition \ref{main prop}, this is equivalent to $\HF(T^3; \Omega)=\A^6$.  We aim to compute $\HF(T^3;\eta)$ for a general $\eta \in H^2(T^3; \Z)=\Z^3$.  Our result is:

\begin{theorem}\label{T^3}
 For a non-zero two form $\eta$, $\HF(T^3; \eta)=\A^2$. 
\end{theorem}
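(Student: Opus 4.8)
The plan is to construct an explicit admissible Heegaard diagram for $T^3$ and read off the perturbed differential directly, then apply the rank formula $\rank\HF = \dim\CF - 2\rank M$ from Lemma \ref{l1} and its surrounding discussion. First I would write $T^3$ as the boundary of a fibration or, more concretely, present it via a genus-$3$ Heegaard diagram — this matches the fact that $\dim\CF$ must accommodate the six generators giving $\HF(T^3;\Omega)=\A^6$. The key observation to exploit is the Remark following Proposition \ref{main prop}: $\HF(T^3;\eta)$ depends only on $\Ker\eta\cap\PD$, the kernel of $\eta$ restricted to integral periodic domains. Since $H^2(T^3;\R)\cong\R^3$ and the space of periodic domains has rank $3$ (because $b_1(T^3)=3$), a nonzero $\eta$ kills at most a codimension-one subspace of periodic domains, so $\eta$ is already ``generic enough'' to behave like $\omega$ on the relevant data.

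**The main computation**

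The heart of the argument is to show that for nonzero $\eta$, the class behaves like the generic class $\omega$ as far as the rank of the boundary matrix is concerned. Concretely, I would identify the (at most six) generators $\x_1,\dots,\x_6$ of $\CF(T^3)$ and compute the domains $\phi_{ij}\in\pi_2(\x_i,\x_j)$ connecting them, recording for each the periodic-domain class of the difference $\phi_{ij}-\phi_{i'j'}$. Then I would evaluate $\eta$ on these domains. The point is that because $T^3$ has such a symmetric and simple periodic-domain structure, the condition ``$\eta\neq 0$'' already forces $\eta(\phi)\neq 0$ for the specific periodic domains $\phi$ that arise as differences in the determinant expansion of the relevant minors of $M$. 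By the Proposition \ref{main prop} philosophy, this means the $3\times 3$ minor determinants that are nonzero for $\omega$ remain nonzero for $\eta$, so $\rank M(\eta) = \rank M(\omega)$, giving $\HF(T^3;\eta)=\HF(T^3;\omega)=\A^2$.

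**The main obstacle**

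The hard part will be pinning down the explicit Heegaard diagram and verifying that the periodic domains appearing as differences $\phi_{ij}-\phi_{i'j'}$ in the determinant expansions are never annihilated by an arbitrary nonzero $\eta$ — a priori a nonzero $\eta$ could lie in the kernel of some particular integral periodic domain, which is precisely why the general inequality in Proposition \ref{main prop}(3) need not be sharp. The resolution should be that by the symmetry of $T^3$ (all three $S^1$ factors are interchangeable, and $GL_3(\Z)$ acts transitively on nonzero primitive classes up to the relevant equivalence), it suffices to check a single representative nonzero $\eta$, say $\eta=\PD[S^1\times S^1\times\mathrm{pt}]$; and for this representative one verifies by hand that $\rank M(\eta)=2$. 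I would therefore reduce to one convenient $\eta$ via the change-of-coordinates symmetry, carry out the finite determinant check on the explicit diagram, and conclude. The delicate bookkeeping is entirely in constructing the diagram so that the holomorphic disk counts and their areas can actually be read off, which is the content deferred to the explicit construction in this section.
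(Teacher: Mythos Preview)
Your overall architecture matches the paper's: both reduce the problem to a single representative $\eta$ via the action of the mapping class group of $T^3$ on $H^2(T^3;\Z)$ (the paper uses $SL_3(\Z)$, you say $GL_3(\Z)$, but the point is the same---automorphisms of $T^3$ act transitively on hyperplanes in $H^2(T^3;\Q)$, so by functoriality it suffices to check one $\eta'$ with codimension-one kernel). So the symmetry reduction, which you correctly identify as the resolution to your ``main obstacle,'' is exactly what the paper does.

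Where you diverge is in establishing the base case. You propose to work on an admissible six-generator diagram, compute the boundary matrix $M$ explicitly, and verify by hand that $\rank M(\eta)=2$ for your chosen representative. The paper explicitly avoids this: it remarks that on the admissible diagram (its Figure~4) the boundary maps are complicated and ``it would be nice if all boundary maps could be found explicitly''---i.e., they are not. Instead the paper computes $\HF(T^3;\omega)=\A^2$ on a \emph{non-admissible} diagram with only two generators (Lemma~\ref{T^3 generic}), where the vanishing of $\partial$ follows from a smallest-area argument rather than a disk count. For the base case $\eta'$ itself, the paper gives a nonconstructive argument: on any admissible diagram only finitely many periodic-domain classes $\phi$ arise in the determinant expansions, so one can choose a rational hyperplane $H'$ missing all of them and take $\eta'$ with $\Ker\eta'=H'$; such an $\eta'$ then behaves exactly like $\omega$ for rank purposes, without ever identifying a single holomorphic disk on the six-generator diagram.

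In short, your plan is not wrong, but the step ``verify by hand that $\rank M(\eta)=2$'' on the six-generator admissible diagram is precisely the computation the paper sidesteps, and it is not clear you could carry it out. The paper's two devices---a two-generator non-admissible diagram for the generic case, and the finiteness-of-$\phi$'s pigeonhole to produce a codimension-one $\eta'$ that still acts generically---are what make the base case tractable.
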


Our proof is based on certain ``special Heegaard Diagram'' first introduced in Ozsv\'ath and Szab\'o \cite {OSzCont}, in which some genus $2g+1$ Heegaard Diagrams were constructed for $\Sigma_g$ bundle over $S^1$.  In this paper, we use a slightly different presentation by drawing two standard $4g$-gons to represent left hand side and right hand side genus-$g$ surfaces respectively.  Two holes are drilled in either side to form a connected sum of a $2g+1$ Heegaard surface.

\begin{figure}

\begin{center}
\includegraphics[width=4.5in]{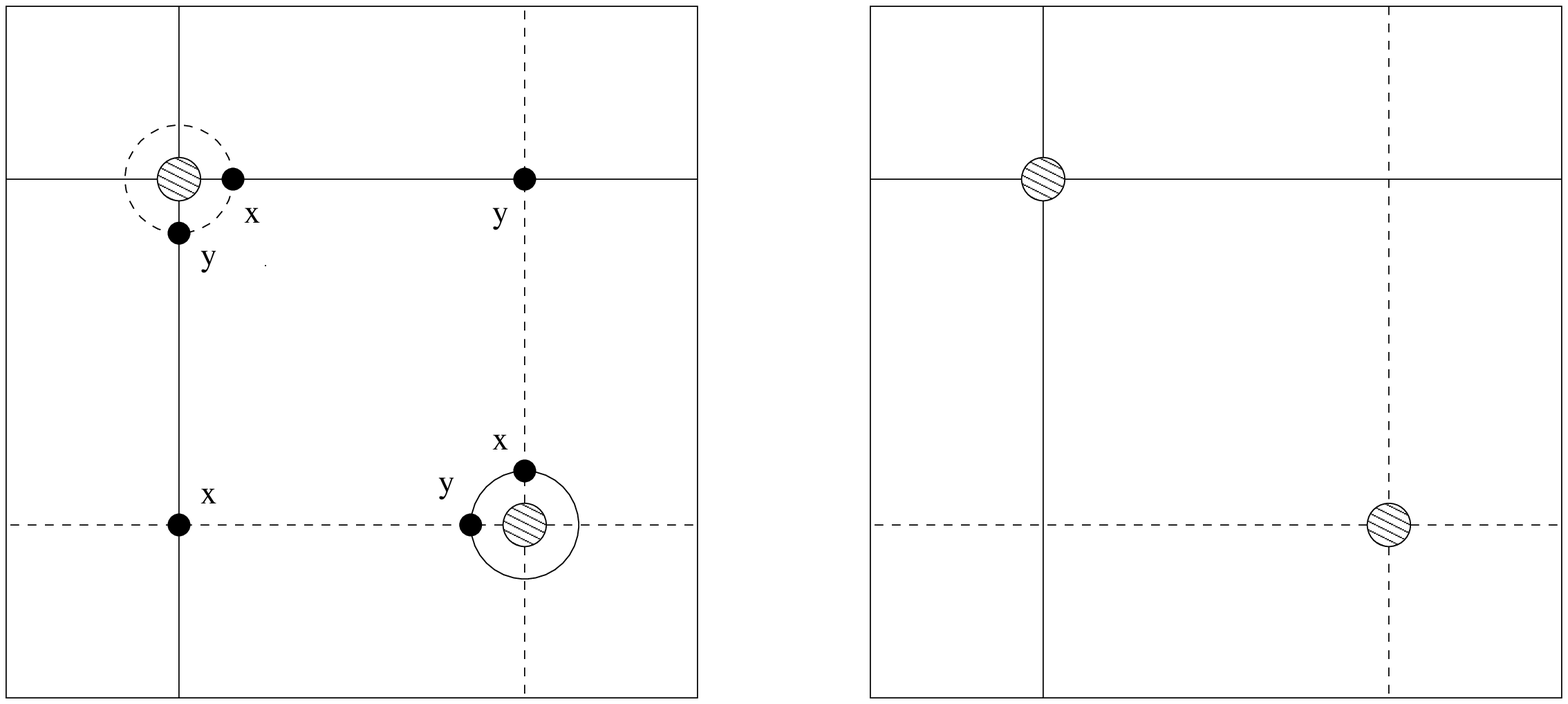}
\setlength{\unitlength}{1.02in}
\put(-1,1){$D_5$}
\put(-3.5,1){$D_1$}
\put(-2.75,1){$D_2$}
\put(-4.25,1){$D_2$}
\put(-1.75,1){$D_6$}
\put(-0.25,1){$D_6$}
\put(-1,0.25){$D_7$}
\put(-3.5,0.25){$D_3$}
\put(-2.75,0.25){$D_4$}
\put(-4.25,0.25){$D_4$}
\put(-1.75,0.25){$D_8$}
\put(-0.25,0.25){$D_8$}
\put(-1,1.75){$D_7$}
\put(-3.5,1.75){$D_3$}
\put(-2.75,1.75){$D_4$}
\put(-4.25,1.75){$D_4$}
\put(-1.75,1.75){$D_8$}
\put(-0.25,1.75){$D_8$}
\put(-1,1.375){$\alpha_2$}
\put(-3.5,1.375){$\alpha_2$}
\put(-1,0.625){$\beta_1$}
\put(-3.5,0.625){$\beta_1$}
\put(-1.375,1){$\alpha_1$}
\put(-0.625,1){$\beta_2$}
\put(-3.875,1){$\alpha_1$}
\put(-3.125,1){$\beta_2$}
\put(-3.6,1.1){$z$}
\put(-3.875,1.65){$\beta_3$}
\put(-2.8,0.625){$\alpha_3$}
\caption{\label{f1} This is a Heegaard Diagram for $T^3$: Tori are represented by rectangles with opposite sides identified, and two holes are punctured in each side, represented by shaded disks.  The Heegaard surface is divided into eight regions $D_1 \cdots D_8$ by $\alpha$'s and $\beta$'s} 
\end{center}

\end{figure}

Figure \ref{f1} is a special diagram for $T^3$: rectangles with opposite sides are identified to represent tori.  $\alpha$ and $\beta$ curves are drawn on both sides and connected through the holes to represent closed curves.  Put the base point $z$ in the region $D_1$.  Note that this is NOT an admissible diagram as periodic domains $\mathcal{D}_1:=D_2+D_4+D_6+D_8$, $\mathcal{D}_2:=D_3+D_4+D_7+D_8$ and $\mathcal{D}_3:=D_5+D_6+D_7+D_8$ have positive coefficients only.

Nevertheless, Figure \ref{f1} is useful in the computation of the perturbed Floer Homology $\HF(T^3; \eta)$; the only restriction of nonadmissibility is given by $\eta(\D_i) > 0$ for all $i$.  But at least, nonadmissible diagrams can be used to compute $\HF(T^3;\omega)$. 

\begin{lemma}
 \label{T^3 generic}
For a generic two form $\omega$, $\HF(T^3; \omega)=\A^2$.

\end{lemma}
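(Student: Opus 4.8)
The plan is to read the perturbed boundary map directly off the special diagram of Figure \ref{f1}. The key observation, already made just before the statement, is that the only failure of admissibility comes from the three periodic domains $\D_1,\D_2,\D_3$ having nonnegative multiplicities. Since the weighting $T^{\mathcal{A}(\phi)}$ is by genuine area and $\HF(T^3;\eta)$ depends (by the Remark) only on $\ker\eta\cap \PD$, for a generic $\omega$ I may realize $[\omega]$ by an honest area form with $\omega(\D_i)>0$ for $i=1,2,3$. Then every class $\phi$ with a nonnegative domain has $\omega(\phi)\geq 0$, and for any bound $N$ only finitely many such classes satisfy $\omega(\phi)<N$; consequently $\partial$ converges in the Novikov ring $\A$ and $\HF(T^3;\omega)$ is well defined from this nonadmissible diagram.

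First I would enumerate the generators $\T_\alpha\cap\T_\beta$ by listing the intersection points of $\alpha_1,\alpha_2,\alpha_3$ with $\beta_1,\beta_2,\beta_3$ appearing in Figure \ref{f1} and recording which matchings $\x=(x_1,x_2,x_3)$ actually occur; this fixes a finite $\A$-basis of $\CF$ of some size $N$. Next I would identify the index-one holomorphic disks: for each ordered pair of generators I would search for domains $\phi\in\pi_2(\x,\y)$ supported in $D_2,\dots,D_8$ (the base point $z$ lies in $D_1$, so $\widehat{CF}$ only counts classes with $n_z(\phi)=0$), determine which are represented by holomorphic disks—the small bigons and squares that are visible directly, together with their translates by the $\D_i$—and record each contribution weighted by $T^{\omega(\phi)}$. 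Assembling these produces the boundary matrix $M(\omega)$ over $\A$.

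To convert this matrix into a rank I would invoke Lemma \ref{l1} together with the mechanism in the proof of Proposition \ref{main prop}. Any two domains entering a minor's determinant differ by a periodic domain $\phi\in\pi_2(\x_1,\x_1)$, and genericity gives $\omega(\phi)\neq 0$ whenever $\phi\neq 0$; hence the only cancellations among the monomials of a minor are those already present formally. Thus $\rank M(\omega)$ equals the rank of the formal matrix over $\Z_2[H^1(T^3;\Z)]$, i.e. the non-torsion rank of the completely twisted differential, which is exactly the content of Proposition \ref{main prop}(2) specialized to this diagram. Finally, using $\rank\HF=\dim_{\A}\CF-2\,\rank M(\omega)$, I would check that the computed rank of $M(\omega)$ forces the answer $\A^2$. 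The already-known value $\HF(T^3;\Omega)=\A^6$ gives $\rank M(\Omega)=(N-6)/2$, so the target is $\rank M(\omega)=\rank M(\Omega)+2$, which serves as a running consistency check.

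The main obstacle is the honest identification of the holomorphic disks in this genus-$3$ connected-sum picture—especially disks whose domains thread the two drilled holes on each side, and the infinite families obtained by adding copies of the nonnegative periodic domains $\D_1,\D_2,\D_3$. I expect the delicate point to be showing that, after weighting by $T^{\omega(\cdot)}$, each such family sums to a geometric-series expression in $\A$ (with leading term a unit), so that the entries of $M(\omega)$ are controlled and the rank computation is unambiguous; everything downstream of that is the routine linear algebra of Lemma \ref{l1}.
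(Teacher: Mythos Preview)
Your proposal is a strategy outline rather than a proof: you explicitly flag the ``honest identification of the holomorphic disks'' and the summation of the infinite $\D_i$-translates as an unresolved obstacle, and nothing downstream can proceed until that is settled. So as written there is a gap---the central computation is not carried out.

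More importantly, the paper's argument sidesteps exactly the obstacle you worry about. Rather than enumerating all $N$ generators and assembling a full matrix $M(\omega)$, the paper first invokes the Adjunction Inequality to discard all nontorsion $\Spinc$ structures, and then uses the first Chern class formula to see that in the unique torsion structure $\s_0$ the diagram of Figure~\ref{f1} has exactly \emph{two} generators $x,y$. With only two generators, the question reduces to showing $\partial x=\partial y=0$. For $\partial x$: the region $D_1$ (which contains $z$) is a disk from $x$ to $y$, and every other index-one disk from $x$ to $y$ differs from it by a nonnegative combination of $\D_1,\D_2,\D_3$, hence still has $n_z>0$; so nothing survives in $\CF$. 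For $\partial y$: if there were any disk $\psi$ from $y$ to $x$, take the one of smallest area; then the lowest-order term of $(\partial^+)^2[x,i]$ would be $T^{\omega(D_1)+\omega(\psi)}[x,i-1-n_z(\psi)]$ with nothing to cancel it, contradicting $(\partial^+)^2=0$. Thus $\HF(T^3;\omega)=\A^2$ with no disk-counting beyond the single observation that $D_1$ exists.

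Your route via Lemma~\ref{l1} and the minor-determinant machinery is in principle viable, but it trades a two-line $\partial^2=0$ trick for an open-ended enumeration problem. Note also that your proposed consistency check $\rank M(\omega)=\rank M(\Omega)+2$ is not directly available: the nonadmissible Figure~\ref{f1} cannot be used to compute $\HF(T^3;\Omega)$, and the rank-$6$ answer comes from a different (admissible) diagram with a different generator count.
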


\begin{proof}
Adjunction Inequality in \cite[section 7]{OSzAnn2} implies $\HF(T^3,\s;\omega)$ vanish for any nontorsion Spin$^c$ structures $\s$. And recall the first Chern class formula \cite[section 7.1]{OSzAnn2}:
$$\langle c_1(\s_y),[\mathcal{P}] \rangle =\chi(\mathcal{P})-2\overline{n}_z(\mathcal{P})+2\sum_{p\in y}\overline{n}_p(\mathcal{P}).$$
where $\s_y$ is a Spin$^{c}$ structure corresponding to $y$.  We find two generators $x$ and $y$ in $\CF(T^3,\mathfrak{s}_0;\omega)$, where $\s_0$ is the unique torsion Spin$^{c}$ structure of $T^3$.  

Observe that $D_1$ is a holomorphic disk connecting $x$ to $y$.  Any other holomorphic disks $\phi$ connecting $x$ to $y$ must differ $D_1$ by a periodic domain with Maslov index 0, hence $\phi$ can be written as $D_1+k_1\D_1+k_2\D_2+k_3\D_3$ for some integers $k_1$, $k_2$ and $k_3$.  A holomorphic disk has nonnegative coefficient in all regions, in particular $D_2$,$D_3$ and $D_5$.  Hence $k_i\geq 0$, which implies that $\phi$ strictly contains $D_1$.

We claim that there is no holomorphic disk connecting $y$ to $x$.  Otherwise, suppose $\psi$ is a disk connecting $y$ to $x$ with the smallest area, then 
 \begin{align*}
 (\partial^{+})^2 [x,i] &= (\partial^{+})(T^{\omega(D_1)}[y,i-1]+\cdots) \\
  &= T^{\omega(D_1)}\cdot T^{\omega(\psi)}[x,i-1-n_z(\psi)]+\text{higher order terms in $T$},
                                                                                                                                                                  \end{align*}
contradicting to $(\partial^{+})^2=0$. 

Hence, $\partial y=0$.  And for any holomorphic disk $\phi$ connecting $x$ to $y$, we have $n_z(\phi) \neq 0$. So $\partial x=0$, and consequently $\HF(T^3;\omega)=\A^2$.       

\end{proof}

Certain modifications on Figure \ref{f1} enable us to compute the perturbed Floer homology for some other two form $\eta$.  For example, Figure \ref{2} can be used for $\eta_1$ with $\eta_1(\D_1)=\eta_1(\D_2)=0$; and Figure \ref{3} can be used for $\eta_2$ with $\eta_2(\D_1)=\eta_2(\D_3)=0$.  In both cases, there are two generators $x$ and $y$, and no boundary map by a similar argument.  Hence, $\HF(Y;\eta_1)=\HF(Y;\eta_2)=\A^2$.

\begin{figure}
\begin{center}
\includegraphics[width=4.5in]{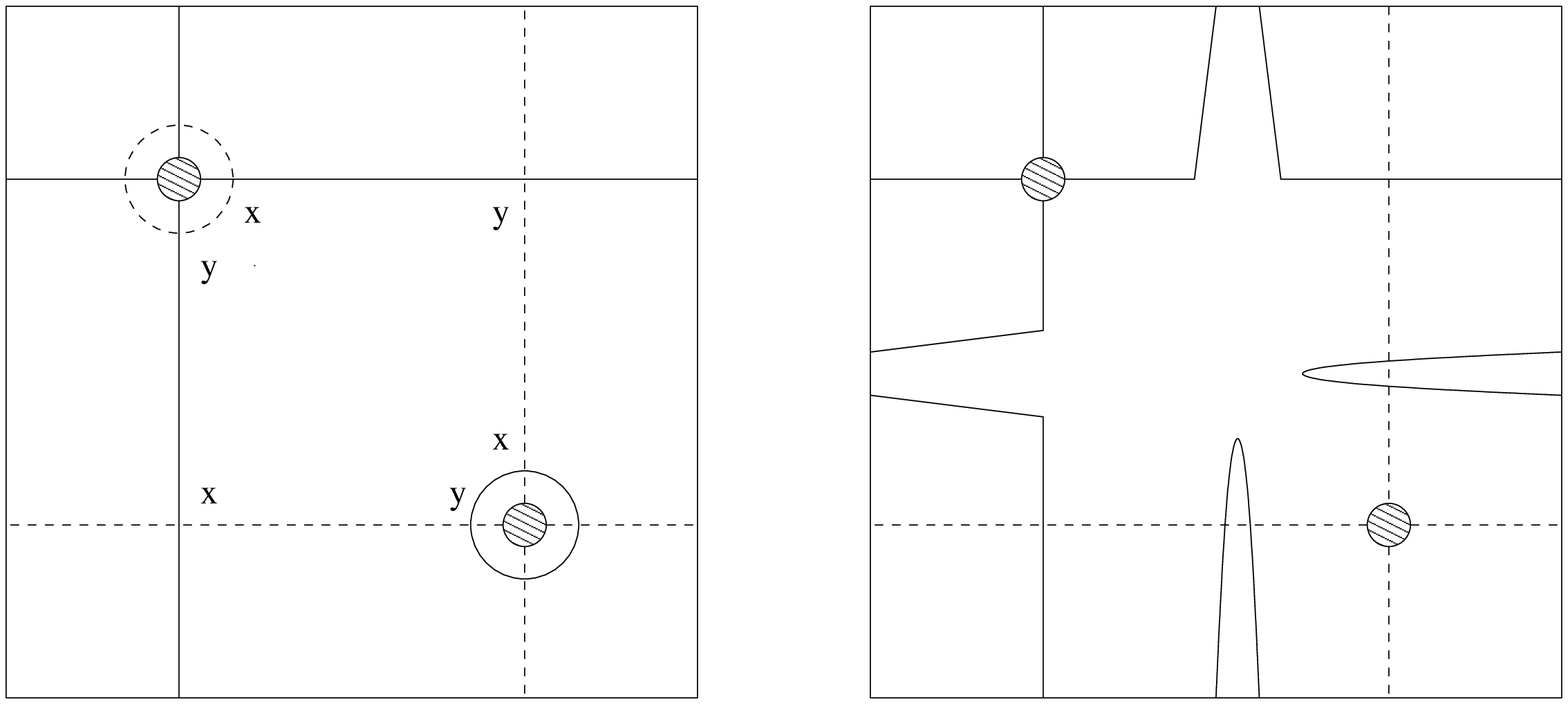}
\setlength{\unitlength}{1.02in}
\put(-1,1.375){$\alpha_2$}
\put(-3.5,1.375){$\alpha_2$}
\put(-1.1,0.625){$\beta_1$}
\put(-3.5,0.625){$\beta_1$}
\put(-1.375,1){$\alpha_1$}
\put(-0.625,1){$\beta_2$}
\put(-3.875,1){$\alpha_1$}
\put(-3.125,1){$\beta_2$}
\put(-3.6,1.1){$z$}
\put(-3.875,1.65){$\beta_3$}
\put(-2.8,0.625){$\alpha_3$}
\caption{\label{2} This is a modified Heegaard Diagram for $T^3$: $\alpha_1$ and $\alpha_2$ are twisted across $\beta_2$ and $\beta_1$ respectively.  In this diagram, there exists two form $\eta_1$ such that $\eta_1(\D_1)=\eta_1(\D_2)=0$.} 
\end{center}

\end{figure}

\begin{figure}
\begin{center}
\includegraphics[width=4.5in]{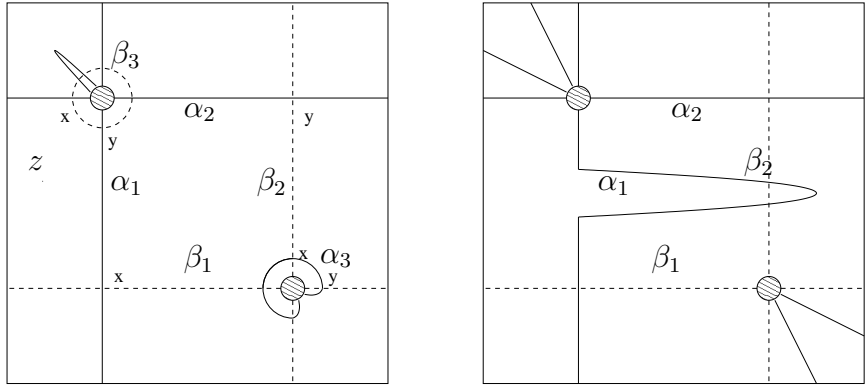}
\setlength{\unitlength}{1.02in}
\put(-1,1.375){$\alpha_2$}
\put(-3.5,1.375){$\alpha_2$}
\put(-1.1,0.6){$\beta_1$}
\put(-3.5,0.6){$\beta_1$}
\put(-1.375,1){$\alpha_1$}
\put(-0.625,1.1){$\beta_2$}
\put(-3.875,1){$\alpha_1$}
\put(-3.125,1){$\beta_2$}
\put(-4.3,1.1){$z$}
\put(-3.875,1.65){$\beta_3$}
\put(-2.8,0.625){$\alpha_3$}

\caption{ \label{3} This is a modified Heegaard Diagram for $T^3$: $\alpha_1$ is twisted across $\beta_2$, and $\alpha_3$ is winding across $\beta_3$.  In this diagram, there exists a two form $\eta_2$ such that $\eta_2(\D_1)=\eta_1(\D_3)=0$.}
\end{center}

\end{figure}
 
Figure \ref{4} is another Heegaard diagram for $T^3$, and it is admissible.  Unlike previous cases though, this time we have six generators, labeled by $x,y, p, p',q$ and $q'$, which is reasonable since $\HF(T^3)$ has rank six.  The boundary map in our case is complicated as well: Figure \ref{4} can be used for computing $\HF(T^3,\Omega)$,$\HF(T^3,\eta_1)$, $\HF(T^3,\eta_2)$ and $\HF(T^3,\omega)$, and the answers are $\A^6$ and $\A^2$ respectively.  So there must exist some cancelling pair of holomorphic disks for the area form $\Omega$ that is no longer cancellable in $\eta_1$, $\eta_2$ or $\omega$.  It would be nice if all boundary maps could be found explicitly. 

\begin{figure}
\begin{center}
\includegraphics[width=4.5in]{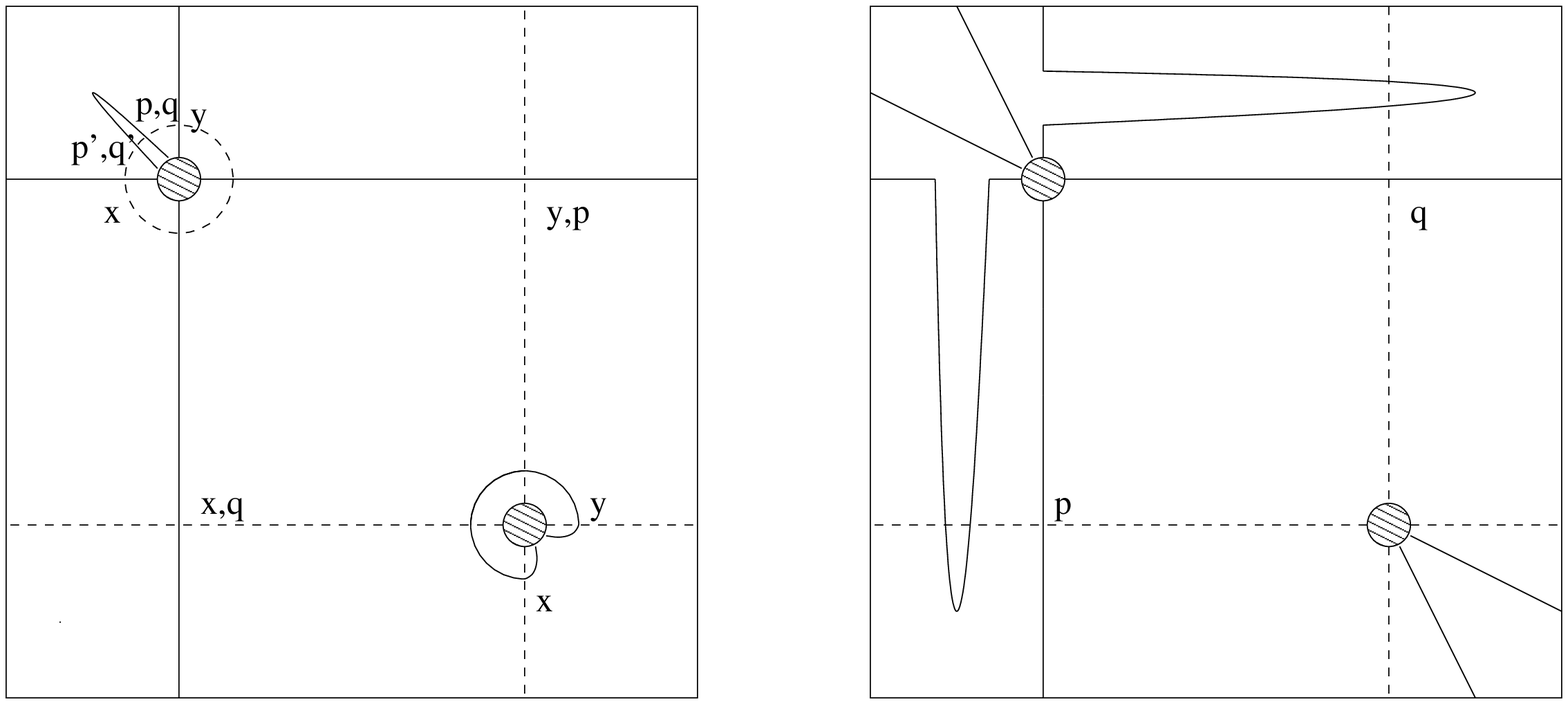}
\setlength{\unitlength}{1.02in}
\put(-1,1.375){$\alpha_2$}
\put(-3.5,1.375){$\alpha_2$}
\put(-1.1,0.625){$\beta_1$}
\put(-3.5,0.625){$\beta_1$}
\put(-1.375,1){$\alpha_1$}
\put(-0.625,1){$\beta_2$}
\put(-3.875,1){$\alpha_1$}
\put(-3.125,1){$\beta_2$}
\put(-4.3,0.2){$z$}
\put(-3.875,1.75){$\beta_3$}
\put(-2.8,0.625){$\alpha_3$}
\caption{\label{4} This is an admissible diagram for $T^3$: $\alpha_1$,$\alpha_2$ are twisted across $\beta_2$,$\beta_1$ respectively, and $\alpha_3$ is winding across $\beta_3$.  In this diagram, there exists two form $\eta$ such that $\eta(\D_1)=\eta(\D_2)=\eta(\D_3)=0$.} 
\end{center}
\end{figure}

Now, we are prepared for the proof of Theorem \ref{T^3}.  The idea is to start from some special two form $\eta'$ with the properties $\HF(T^3;\eta')=\A^2$ and $\Ker(\eta')$ a co-dimension-1 subspace of $H^2(T^3;\Q)=\Q^3$ (Both $\eta_1$ and $\eta_2$ meet the requirements).  Then, we look for some element of the large automorphism group of $T^3$ to map $\Ker(\eta')$ to some given hyperplane of $\Q^3$, namely $\Ker(\eta)$.  Functoriarily of Heegaard Floer homology implies the corresponding map from $\HF(T^3;\eta')$ to $\HF(T^3;\eta)$ is also an isomorphism, giving $\A^2$.

\begin{proof}[ proof of Theorem \ref{T^3}]
 
As mentioned earlier, both $\eta_1$ and $\eta_2$ can serve as our $\eta'$.  Instead, we describe a nonconstructive way of finding $\eta'$ that is valid in general situation.  Fix an admissible Heegaard diagram and find all generators and boundary maps.  There are only finitely many $\phi$'s in the sense of the proof of Proposition \ref{main prop}, so we can find a hyperplane $H'$ in $\Q^3$ missing all the $\phi$'s.  Let $\eta'$ evaluate zero on the hyperplane, and nonzero elsewhere.  Clearly, $\Ker(\eta')=H'$ has co-dimension 1.  And since $\eta'$ evaluates nonzero on all $\phi$'s, it essentially plays the role of a generic form $\omega$, hence by lemma \ref{T^3 generic}, $\HF(T^3;\eta')=\HF(T^3;\omega)=\A^2$.     

Suppose $\Ker(\eta)$ is another hyperplane $H$.  It is always possible to find some element of $SL_3(\Z)$ that maps $H'$ to $H$. On the other hand, any element of $SL_3(\Z)$ can be realized as the underlying $H^2(T^3;\Z)$ map induced by some $T^3$ automorphism, say $\Phi$ in this case.  Then, $\HF(T^3; \eta)=\HF(T^3; \Phi^{*}(\eta))=\HF(T^3;\eta')=\A^2$.  

\end{proof}

\begin{rem}
 
In a recent preprint by Ai and Peters \cite{AiP}, it was shown that any torus bundle $Y$ with fiber $F$ has $\HF(Y,\eta)=\A^2$ for any $\eta$ with $\eta(F)\neq 0$.  Surgery exact sequences for perturbed Floer homology were developed and applied in that paper.  Alternatively, our method of ``special Heegaard diagram'' can be applied here with ease: The left-hand rectangle is the same as that of $T^3$, and there are the same two generators with a unique smallest holomorphic disk connecting them.    

\end{rem}

\section{Computations of $ \Sigma_g \times S^1 $}

In this section, we compute the perturbed Heegaard Floer homology of $\Sigma_g \times S^1$ for $g>1$.  Our result is:

\begin{theorem}\label{big}
For a non-zero two form $\eta$, $HF^+(\Sigma_g \times S^1, k; \eta)=(\A[U]/U)^{\binom{2g-2}{d}} $, where $d=g-1-|k|$, $k\neq0$.

\end{theorem}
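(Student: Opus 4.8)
The plan is to run the same machinery used for $T^3$ in Section 3, now on a genus-$(2g+1)$ special Heegaard diagram for $\Sigma_g\times S^1$ obtained by drawing two standard $4g$-gons, drilling two holes in each side and forming the connected sum exactly as in Figure \ref{f1}. First I would dispose of the trivial range: when $|k|>g-1$ we have $d=g-1-|k|<0$, so $\binom{2g-2}{d}=0$, while the adjunction inequality invoked in Lemma \ref{T^3 generic} forces $HF^+(\Sigma_g\times S^1,k;\eta)=0$; so assume $0<|k|\le g-1$. On the special diagram I would locate the generators lying in the non-torsion $\Spinc$ structure $\s_k$ (the one with $\langle c_1(\s_k),[\Sigma_g]\rangle=2k$) by means of the first Chern class formula recalled in Lemma \ref{T^3 generic}, and show that there are exactly $\binom{2g-2}{d}$ of them; the quantity $2g-2=-\chi(\Sigma_g)$, together with the choice of $d$ of these slots, is where the binomial coefficient should come from.

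The backbone of the argument is the pair of general facts in Proposition \ref{main prop+}. On one hand, part (1) gives an $\eta$-independent Euler characteristic $\chi(HF^+(\Sigma_g\times S^1,k;\eta))=\pm\tau_t$. I would compute the Turaev torsion of the product directly, $\tau(\Sigma_g\times S^1)=(t^{1/2}-t^{-1/2})^{2g-2}$, and read off that the coefficient of $t^k$ is $(-1)^{d}\binom{2g-2}{d}$; hence $|\chi(HF^+(\Sigma_g\times S^1,k;\eta))|=\binom{2g-2}{d}$ for every $\eta$, which already pins the rank from below. On the other hand, parts (3)--(4) identify $\rank HF^+(\dots;\omega)$ for generic $\omega$ with the non-torsion part of the completely twisted theory and make it the floor of the rank inequality; I would compute this floor on the special diagram by the vanishing-differential argument of Lemma \ref{T^3 generic}, using that a distinguished family of generators is joined by unique smallest holomorphic disks (as in the Remark closing Section 3), to obtain $\rank HF^+(\dots;\omega)=\binom{2g-2}{d}$.

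The real content, and the source of the advertised independence on the direction of $\eta$, is the matching upper bound $\rank HF^+(\Sigma_g\times S^1,k;\eta)\le\binom{2g-2}{d}$ for every nonzero $\eta$. Since the rank inequality of Proposition \ref{main prop+} only bounds $\rank HF^+(\dots;\eta)$ above by the larger unperturbed rank $\rank HF^+(\dots;\Omega)$, this bound cannot be formal and must be extracted from the diagram. Following the determinant/pairing analysis in the proof of Proposition \ref{main prop}, I would show that the periodic domains $\D$ entering the cancellations in $\s_k$ are precisely the ones on which any nonzero $\eta$ evaluates nontrivially, so that $\rank M(\eta)=\rank M(\omega)$; concretely, I would exhibit a top-rank minor whose terms cannot be paired off under any nonzero $\eta$, forcing $D(\eta)\neq0$ whenever $D(\omega)\neq0$. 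Combining the two bounds yields $\rank HF^+(\Sigma_g\times S^1,k;\eta)=\binom{2g-2}{d}$ for all nonzero $\eta$; since $\s_k$ is non-torsion we have $HF^\infty=0$, so the finitely generated module of Proposition \ref{main prop+}(1) has $U$ acting nilpotently, and the single-step structure forces $HF^+(\Sigma_g\times S^1,k;\eta)\cong(\A[U]/U)^{\binom{2g-2}{d}}$.

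I expect the main obstacle to be precisely this last step at the level of $HF^+$ rather than $\CF$. The perturbation $T^{\eta(\phi)}$ is sensitive to the fiber class $[\Sigma_g]$ through $n_z$, so even an $\eta$ that is trivial on the $H^1(\Sigma_g)\otimes H^1(S^1)$ periodic domains can still reorganize the $U$-tower; showing that every nonzero $\eta$ collapses the tower to the same $(\A[U]/U)^{\binom{2g-2}{d}}$, uniformly and independently of whether $\eta$ pairs nontrivially with the fiber or with the $\gamma_i\times S^1$ directions, is the delicate combinatorial heart of the computation and the one place where the explicit bookkeeping of holomorphic disks on the genus-$(2g+1)$ diagram cannot be avoided.
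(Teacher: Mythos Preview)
Your outline has the right two-stage shape, but there are concrete gaps in both stages.

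\textbf{Stage 1 (the generic computation).} The generator count is wrong: on the special genus-$(2g+1)$ diagram there are not $\binom{2g-2}{d}$ generators in $\s_k$ but $2\binom{2g-1}{d}$ of them, and they split into four classes $A,A',B,B'$ of sizes $\binom{2g-2}{d-1},\binom{2g-2}{d-1},\binom{2g-2}{d},\binom{2g-2}{d}$. The paper's Proposition~\ref{biglemma} does not simply read off a vanishing differential as in Lemma~\ref{T^3 generic}; it uses the small hexagons $D,D'$ to show (i) $\Ker\cap N=0$ via a lowest-order-in-$T$ argument, (ii) the $A\to A'$ block is surjective via a nonvanishing $\binom{2g-2}{d-1}\times\binom{2g-2}{d-1}$ determinant, and then (iii) pins the rank with the Euler characteristic. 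So the answer $\binom{2g-2}{d}$ comes from a cancellation inside a larger complex, not from having that many generators.

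\textbf{Stage 2 (independence of $\eta$).} The determinant/pairing route you propose---finding a top-rank minor whose terms refuse to pair off under \emph{every} nonzero $\eta$---is exactly what the paper avoids, and for good reason: there is no single periodic domain on which every nonzero $\eta$ is nonzero, so the claim ``$\rank M(\eta)=\rank M(\omega)$ for all nonzero $\eta$'' cannot be established uniformly by minor analysis. The paper's mechanism is the symmetry trick from Section~3: first observe (Remark after Theorem~\ref{big}) that since $\langle c_1(\s_k),[\Sigma_g]\rangle\neq0$, perturbations in the fiber direction are irrelevant and one may take $\eta\in H^1(\Sigma_g;\Z)$; then choose a single $\eta'$ with $\Ker\eta'$ a rational hyperplane missing the finitely many relevant $\phi$'s, so that $HF^+(\,\cdot\,;\eta')=HF^+(\,\cdot\,;\omega)$; finally use that $Sp(2g;\Z)$ acts transitively on hyperplanes in $H^1(\Sigma_g;\Q)$ and is realized by $\mathrm{Mod}_g$, so functoriality transports $\eta'$ to an arbitrary nonzero $\eta$ (Lemma~\ref{Sigma_g}). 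This is the missing idea that eliminates the ``delicate combinatorial heart'' you anticipated.

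Finally, knowing $\rank_\A HF^+=\binom{2g-2}{d}$ alone does not force the $\A[U]$-module structure: $\A[U]/U^2$ and $(\A[U]/U)^2$ have the same $\A$-rank. The paper closes this by also carrying the $\widehat{HF}$ computation through Lemma~\ref{Sigma_g}, obtaining $\rank_\A\widehat{HF}=2\binom{2g-2}{d}$, and then the general shape $\bigoplus_i\A[U]/U^{k_i}$ with $\sum k_i=\binom{2g-2}{d}$ and $2\cdot(\text{number of summands})=2\binom{2g-2}{d}$ forces every $k_i=1$.
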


Here, $HF^+(\Sigma_g\times S^1,k; \eta)$ denotes the summand of $HF^+(\Sigma_g \times S^1;\eta)$ corresponding to the Spin$^c$ structure $\s$ with $ \langle c_1(\s),[\Sigma_g] \rangle =2k$ and $\langle c_1(\s),\gamma \times S^1 \rangle=0$ for all curves $\gamma \subset \Sigma_g$.

\begin{rem}
When $k\neq 0$, i.e. $\langle c_1(\s),[\Sigma_g]\rangle \neq 0$, perturbations in $\Sigma_g$-direction doesn't have any effect on the Heegaard Floer homology.  Hence, we can restrict our consideration of $\eta$ to the subspace $H^2(\Sigma_g; \Z)$ of $H^1(\Sigma_g \times S^1; \Z)$. 

\end{rem}

We can compare this result with the unperturbed case computed by Ozsv\'ath and Szab\'o in \cite[section 9]{OSzKnot}:

\begin{theorem}\label{X(g,d)}
Fix an integer $k \neq 0$.  Then, there is an identification of $\Z$-modules $$HF^+(\Sigma_g\times S^1,k) \cong X(g,d),$$ where $d=g-1-|k|$, and $$X(g,d)=\bigoplus^d_{i=0} \Lambda^{2g-i} H^1(\Sigma_g) \otimes_\Z (\Z[U]/U^{d-i+1}).$$ 

\end{theorem}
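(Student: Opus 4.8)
The plan is to realize $\Sigma_g\times S^1$ as surgery on a standard knot whose knot Floer homology is computable, and then feed the resulting filtered complex through the surgery formula; this is the route taken by Ozsv\'ath--Szab\'o in the cited computation. Concretely, let $B_g\subset \#^{2g}(S^1\times S^2)$ denote the $g$-fold connected sum of the Borromean knot $B$, that is, one component of the Borromean rings after $0$-framed surgery on the other two. The knot $B_g$ is fibered with fiber a once-punctured $\Sigma_g$, and $0$-surgery on $B_g$ caps this off to the $\Sigma_g$-bundle with trivial monodromy, namely $\Sigma_g\times S^1$. So the first step is to record this surgery description, together with the identification of the surgery $\Spinc$ structures with the classes satisfying $\langle c_1(\s),[\Sigma_g]\rangle=2k$.

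Second, I would compute the knot Floer homology of $B_g$. For a single Borromean knot one has $\widehat{HFK}(\#^{2}(S^1\times S^2),B,j)\cong \Lambda^{1+j}H^1(T^2)$, with vanishing reduced differential. The Künneth formula for knot Floer homology under connected sums then yields
$$\widehat{HFK}(\#^{2g}(S^1\times S^2),B_g,j)\cong \Lambda^{g+j}H^1(\Sigma_g;\Z).$$
A rank count $\sum_j\binom{2g}{g+j}=2^{2g}=\dim\widehat{HF}(\#^{2g}(S^1\times S^2))$ shows the spectral sequence from $\widehat{HFK}$ to $\widehat{HF}$ degenerates, so the full filtered complex $CFK^\infty(B_g)$ is forced to be the $g$-fold tensor product of the elementary ``square'' complex of $B$. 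This pins down both the $(i,j)$-filtration and the $U$-action.

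Third, I would apply the Ozsv\'ath--Szab\'o $0$-surgery formula, which identifies $HF^+(\Sigma_g\times S^1,k)$ with the homology of a subquotient complex of $CFK^\infty(B_g)$ supported where the two filtration coordinates straddle the level $|k|$. Since $CFK^\infty(B_g)$ splits as a tensor product of squares, this subquotient decomposes as a direct sum indexed by the Alexander grading: a generator at Alexander grading $j=g-i$, contributing $\Lambda^{2g-i}H^1(\Sigma_g)\cong\widehat{HFK}(B_g,j)$ since $g+j=2g-i$, survives together with a $U$-tower of length $j-|k|=d-i+1$. Summing over $i=0,\dots,d$ with $d=g-1-|k|$ reassembles exactly
$$X(g,d)=\bigoplus_{i=0}^{d}\Lambda^{2g-i}H^1(\Sigma_g)\otimes_\Z(\Z[U]/U^{d-i+1}).$$

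The main obstacle is this final assembly: carefully tracking the Maslov and Alexander gradings and the $U$-action through the surgery formula, so as to verify that the tower attached to Alexander grading $j$ has length precisely $j-|k|$, and that only the gradings $j>|k|$ (equivalently $0\le i\le d$) contribute while the rest are truncated. This bookkeeping, rather than any new geometric input, is where the exact shape of $X(g,d)$ must be pinned down.
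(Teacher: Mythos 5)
Your proposal is correct and is essentially the argument this paper relies on: the paper does not prove Theorem \ref{X(g,d)} itself but quotes it from Ozsv\'ath--Szab\'o \cite[Section 9]{OSzKnot}, whose proof is precisely your route --- realizing $\Sigma_g\times S^1$ as $0$-surgery on the connected sum of Borromean knots $B_g\subset \#^{2g}(S^1\times S^2)$, computing $\widehat{HFK}(B_g,j)\cong \Lambda^{g+j}H^1(\Sigma_g)$ by K\"unneth, and taking homology of the straddling subquotient $C\{i<0,\ j\geq k\}$ of $CFK^\infty(B_g)$. The one step to tighten is where you say degeneration of the spectral sequence ``forces'' $CFK^\infty(B_g)$ to be the tensor product of squares: degeneration of the hat spectral sequence only kills the differentials preserving the $i$-coordinate, and vanishing of the full $CFK^\infty$ differential instead follows by comparing the generator count $2^{2g}$ with the rank of $HF^\infty(\#^{2g}(S^1\times S^2))$ over $\Z[U,U^{-1}]$ (the two ranks happen to agree numerically here, so your count is right once the comparison group is corrected), after which your bookkeeping --- a $U$-tower of length $j-|k|=d-i+1$ attached to $\Lambda^{2g-i}H^1(\Sigma_g)$ for $0\le i\le d$ --- assembles $X(g,d)$ exactly as in the cited source.
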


It's interesting to compare the Euler characteristic of $HF^+$.  Recall the following combinatorial identity:

\begin{lemma}
$\sum_{i=1}^m (-1)^{i+1}i\binom{2g}{m-i}=\binom{2g-2}{m-1}$ 

\end{lemma}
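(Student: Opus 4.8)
The plan is to read the left-hand side as a single coefficient in the product of two generating functions, after which the identity becomes a transparent consequence of the binomial theorem. Writing $n=2g$, I would first record the two auxiliary expansions I need. On the one hand, differentiating the geometric series gives $\sum_{i\geq 1}(-1)^{i+1} i\, x^i = x/(1+x)^2$, where the alternating sign is produced by substituting $-x$ into $\sum_{i\geq 1} i\,y^i = y/(1-y)^2$. On the other hand, the binomial theorem gives $\sum_{k\geq 0}\binom{n}{k} x^k=(1+x)^n$.

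Next I would form the product of these two series. Its coefficient of $x^m$ is exactly the convolution $\sum_{i=1}^m (-1)^{i+1} i \binom{n}{m-i}$: the lower limit $i=1$ reflects that the first factor has no constant term, and the upper limit $i=m$ reflects the requirement $m-i\geq 0$, so that $\binom{n}{m-i}$ is a genuine coefficient rather than zero. Hence the left-hand side equals $[x^m]\bigl(\tfrac{x}{(1+x)^2}(1+x)^n\bigr)=[x^m]\,x(1+x)^{n-2}$, writing $[x^m]$ for the operation of extracting the coefficient of $x^m$.

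Finally, I would read off this coefficient: $[x^m]\,x(1+x)^{n-2}=[x^{m-1}](1+x)^{n-2}=\binom{n-2}{m-1}$, which is $\binom{2g-2}{m-1}$ once we restore $n=2g$. This completes the argument.

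The only point requiring care — and the step I expect to be the main, if minor, obstacle — is the bookkeeping of the sign and the index shift: one must verify that the alternating-sign generating function is $x/(1+x)^2$ and not $x/(1-x)^2$, and that the range of the convolution index matches the stated summation bounds exactly. Should one prefer to avoid generating functions, the same identity follows by induction on $m$ using $k\binom{n}{k}=n\binom{n-1}{k-1}$ together with the telescoping alternating sum $\sum_{j=0}^{N}(-1)^j\binom{n}{j}=(-1)^N\binom{n-1}{N}$; I would keep this as a fallback but regard the generating-function route as the cleaner one.
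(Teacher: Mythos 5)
Your proof is correct and takes essentially the same route as the paper: both extract the coefficient of $x^m$ from the identity $\frac{x}{(1+x)^2}(1+x)^{2g}=x(1+x)^{2g-2}$, with the left factor being the alternating generating function $\sum_{i\geq 1}(-1)^{i+1}i\,x^i$. You merely spell out the sign and index bookkeeping that the paper leaves implicit.
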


\begin{proof}
 
Write out the identity $\frac{x}{(1+x)^2} (1+x)^{2g}=x(1+x)^{2g-2}$ in formal series $$(\sum_{i=0}^{\infty} (-1)^{i+1}ix^i)\cdot (\sum_{i=0}^{\infty}\binom{2g}{m-i}x^{m-i})=\sum_{m=0}^{\infty}\binom{2g-2}{m-1}x^m,$$ and compare their coefficients for $x^m$.
\end{proof}

Hence, replace $d$ by $m-1$ in the formula, we have $$\chi(HF^+(\Sigma_g \times S^1,k))=\sum_{i=0}^d (-1)^{i+1}(d-i+1)\binom{2g}{i}=(-1)^{d-1}\binom{2g-2}{d}.$$  
This agrees with the Euler characteristic of $HF^+({\Sigma_g \times S^1, k; \eta})$ as expected from Proposition \ref{main prop+}.  In fact, we will use the Euler characteristic as one of the key ingredients in our proof of Theorem \ref{big}.

Just like the case of $T^3$, we divide the proof of Theorem \ref{big} into two steps:

\textit{Step 1:} We use a special Heegaard diagram for $\Sigma_g \times S^1$ in Figure \ref{Sigma2}.  There are two generators in spin$^c$ structures $k=g-1$, marked out in the figure by dots and squares.  In general, there are $2 \binom {2g-1}{d} $ generators in Spin$^c$ structure $k=g-1-d$, obtained by moving $d$ of the intersection points between $\alpha_i$ and $\beta_i$ ($i\leq 2g$) from the upper polygon to the lower polygon.  These generators are further divided into four classes: 
\begin{itemize}
 \item \textit{Class A} consists of $\binom{2g-2}{d-1}$ generators.  These generators have the intersection between $\alpha_{2g-1}$ and $\beta_{2g-1}$ in the lower polygon.

\item \textit{Class A'} consists of $\binom{2g-2}{d-1}$ generators.  These generators have the intersection between $\alpha_{2g}$ and $\beta_{2g}$ in the lower polygon.

\item \textit{Class B} consists of $\binom{2g-2}{d}$ generators.  These generators have the intersection between $\alpha_{2g}$ and $\beta_{2g}$ in the upper polygon.

\item \textit{Class B'} consists of $\binom{2g-2}{d}$ generators.  These generators have the intersection between $\alpha_{2g-1}$ and $\beta_{2g-1}$ in the upper polygon.

\end{itemize}





\begin{figure}
\begin{center}
\includegraphics[width=3in]{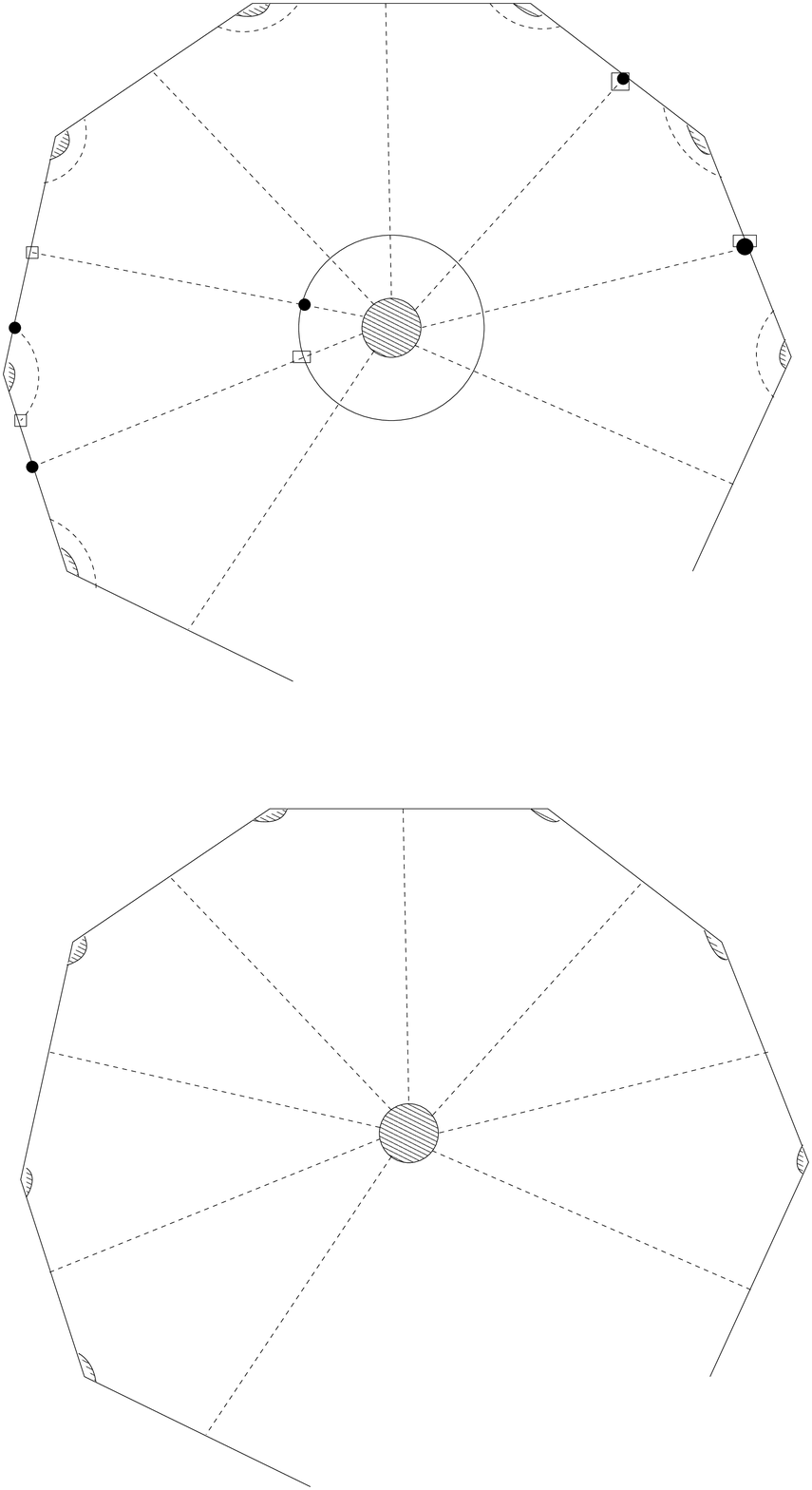}
\setlength{\unitlength}{0.0084in}
\put(-15,100){$\alpha_2$}
\put(-27,210){$\alpha_1$}
\put(-80,285){$\alpha_2$}
\put(-210,310){$\alpha_1$}
\put(-320,270){$\alpha_{2g}$}
\put(-370,210){$\alpha_{2g-1}$}
\put(-360,80){$\alpha_{2g}$}
\put(-310,20){$\alpha_{2g-1}$}
\put(-20,460){$\alpha_2$}
\put(-32,570){$\alpha_1$}
\put(-85,645){$\alpha_2$}
\put(-215,670){$\alpha_1$}
\put(-325,630){$\alpha_{2g}$}
\put(-375,570){$\alpha_{2g-1}$}
\put(-365,440){$\alpha_{2g}$}
\put(-315,380){$\alpha_{2g-1}$}
\put(-200,600){$\beta_1$}
\put(-140,580){$\beta_2$}
\put(-100,540){$\beta_1$}
\put(-105,480){$\beta_2$}
\put(-255,575){$\beta_{2g}$}
\put(-270,525){$\beta_{2g-1}$}
\put(-270,480){$\beta_{2g}$}
\put(-235,440){$\beta_{2g-1}$}
\put(-195,240){$\beta_1$}
\put(-135,220){$\beta_2$}
\put(-95,180){$\beta_1$}
\put(-100,120){$\beta_2$}
\put(-250,215){$\beta_{2g}$}
\put(-265,165){$\beta_{2g-1}$}
\put(-265,120){$\beta_{2g}$}
\put(-230,80){$\beta_{2g-1}$}
\put(-160,480){$\alpha_{2g+1}$}
\put(-50,500){$\beta_{2g+1}$}
\put(-280,500){$z$}
\caption{\label{Sigma2} This is a non-admissible Heegaard Diagram for $\Sigma_g\times S^1$.  Two holes are punctured in each $4g$-gons and connected to a genus $2g+1$ Heegaard surface.  The two generators in spin$^c$ structures $k=2g-2$ are marked out by dots and squares. In general, there are $2 \binom {2g-1}{d} $ generators in Spin$^c$ structure $k$, which are obtained by moving $d$ of the intersection points between $\alpha_i$ and $\beta_i$ from the upper polygon to the lower polygon. }
\end{center}

\end{figure}


Denote the hexagon region where we put the base point $z$ by $D$, and the corresponding hexagon region in the lower polygon by $D'$.  Pairs of generators from Class $A$ to $A'$ are connected by $D'$, while pairs of generators from Class $B$ to $B'$ are connected by $D$.  

We summarize all the information gathered so far for the chain complex $CF^+$ in Figure \ref{CF^+}.  If there were no other holomorphic disks besides $D$ and $D'$ in the diagram, then $HF^+=(\A[U]/U)^{\binom{2g-2}{d}}$.  However, with a little assumption on the two form $\omega$, we would be able to prove the fact without much knowledge of the boundary map $\partial$.

\begin{figure}
\begin{center}
\includegraphics[width=4.5in]{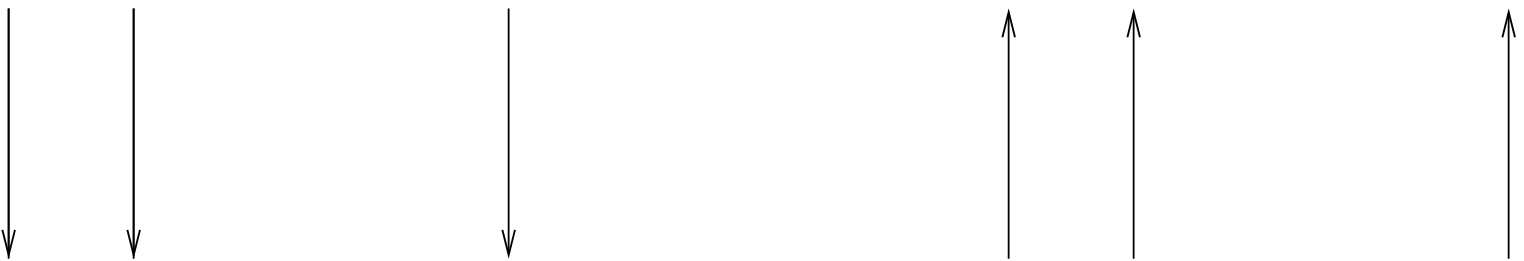}
\setlength{\unitlength}{0.0084in}
\put(0,50){$n_z=0$}
\put(-30,20){$D'$}
\put(-160,20){$D'$}
\put(-205,20){$D'$}
\put(-350,20){$D$}
\put(-480,20){$D$}
\put(-525,20){$D$}
\put(-590,50){$n_z=1$}
\put(-540,100){$b_1$}
\put(-495,100){$b_2$}
\put(-365,100){$b_{\binom{2g-2}{d}}$}
\put(-555,-20){$b_1'U^{-1}$}
\put(-495,-20){$b_2'U^{-1}$}
\put(-365,-20){$b_{\binom{2g-2}{d}}'U^{-1}$}
\put(-15,-20){$a_{\binom{2g-2}{d-1}}$}
\put(-145,-20){$a_2$}
\put(-190,-20){$a_1$}
\put(-15,100){$a_{\binom{2g-2}{d-1}}'$}
\put(-145,100){$a_2'$}
\put(-190,100){$a_1'$}
\put(-450,50){$\cdots$}
\put(-100,50){$\cdots$}
\put(-610,100){Odd}
\put(-610,-20){Even}

\caption{\label{CF^+} This diagram includes all the information we know about $CF^+$.  Class A,A',B and B' generators are denoted by $a_1,\cdots, a_{\binom{2g-2}{d-1}}$ , $a_1' \cdots, a_{\binom{2g-2}{d-1}}'$,   $b_1,\cdots, b_{\binom{2g-2}{d}}$ and $b_1' \cdots, b_{\binom{2g-2}{d}}'$ respectively.  In $\Z/2\Z$ grading,Class B,A' have odd degree, and Class A,B' have even degree.  Miraculously, these little information almost determines $HF^+$ completely. } 
\end{center}
\end{figure}

\begin{proposition}
 \label{biglemma}

For a generic two form $\omega$ with $\omega(D)=\omega(D')\ll \omega(\text{other regions})$, we have $HF^+(\Sigma_g \times S^1, k; \omega)=(\A[U]/U)^{\binom{2g-2}{d}}$, where $d=g-1-|k|$, $k\neq 0$. 
\end{proposition}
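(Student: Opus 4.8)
The plan is to leverage the structural information already encoded in Figure~\ref{CF^+} together with the genericity hypothesis $\omega(D)=\omega(D') \ll \omega(\text{other regions})$, and then pin down the answer using the Euler characteristic computed earlier. First I would set up the chain complex $CF^+$ over $\A[U]$ with the four classes of generators $\{a_i\}$, $\{a_i'\}$, $\{b_j\}$, $\{b_j'\}$ as labelled, recording that there are $\binom{2g-2}{d-1}$ generators in each of Class A and A', and $\binom{2g-2}{d}$ in each of Class B and B'. The known differentials are $\partial a_i = T^{\omega(D')}a_i' + (\text{corrections})$ via the hexagon $D'$ connecting A to A', and $\partial b_j = T^{\omega(D)}b_j' U^{-1}\cdot(\cdots)$ via the hexagon $D$ connecting B to B' (following the $n_z$ and $U$-action bookkeeping of Figure~\ref{CF^+}). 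The goal is to show that after cancellation only a $\binom{2g-2}{d}$-dimensional piece of the form $(\A[U]/U)$ survives.

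\textbf{Main steps.} The key idea is the same philosophy used in Lemma~\ref{T^3 generic} and Proposition~\ref{main prop}: under a generic $\omega$, each holomorphic disk contributes a distinct power of $T$, so potential cancellations among boundary terms are killed and the differential becomes as large as the area hypothesis allows. I would argue that because $\omega(D)=\omega(D')$ are strictly smaller than all other region areas, the disks $D$ and $D'$ realize the \emph{lowest-order} (smallest $T$-exponent) contributions to $\partial$; any competing holomorphic disk connecting the same pair of generators must cover additional regions and hence carries a strictly higher power of $T$. Consequently the matrix of $\partial$, written over $\A$ as in Lemma~\ref{l1}, has its rank determined by the leading $T^{\omega(D)}$ and $T^{\omega(D')}$ entries, which form two injective blocks: one pairing Class A isomorphically into Class A', and one pairing Class B into Class B' (with the $U^{-1}$ shift). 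I would then invoke Lemma~\ref{l1} to read off $\rank\partial$: the A-to-A' block contributes rank $\binom{2g-2}{d-1}$ and the B-to-B' block contributes rank $\binom{2g-2}{d}$, so these generators cancel in homology, leaving the $b_j'U^{-1}$ images as the surviving classes supporting a single $U$-tower truncated at $U$, i.e.\ $(\A[U]/U)^{\binom{2g-2}{d}}$.

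\textbf{Euler characteristic check.} To rule out the possibility that some surviving generators are themselves killed by higher differentials, or that additional classes survive beyond the naive count, I would bring in Proposition~\ref{main prop+}, which gives $\chi(HF^+(\Sigma_g\times S^1,k;\omega))=\chi(HF^+(\Sigma_g\times S^1,k))=(-1)^{d-1}\binom{2g-2}{d}$. Since each $\A[U]/U$ summand contributes $\pm 1$ to the Euler characteristic with a fixed parity dictated by the $\Z/2\Z$-grading recorded in Figure~\ref{CF^+} (Class B and A' odd, Class A and B' even), matching both the total rank from the differential computation and this Euler characteristic forces the homology to be exactly $(\A[U]/U)^{\binom{2g-2}{d}}$ with no room for extra cancellation or extra towers.

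\textbf{Anticipated obstacle.} The hard part will be establishing rigorously that no \emph{hidden} holomorphic disk, carrying a power of $T$ high enough to survive the genericity argument, alters the rank of $\partial$ in a way that escapes the leading-order analysis --- in particular, controlling differentials that might connect Class B directly to Class A' (both odd) or chain two small hexagons together. I expect to handle this by the positivity-of-domains argument used in Lemma~\ref{T^3 generic}: any such disk decomposes as $D$ or $D'$ plus a nonnegative combination of the periodic domains, so its area strictly exceeds $\omega(D)=\omega(D')$, and the generic (kernel-free) behavior of $\omega$ on periodic domains guarantees these higher-order terms cannot cancel the leading block. Verifying the exact $U$-module structure (that the surviving tower is truncated precisely at $U$ rather than at some higher power) is the remaining delicate bookkeeping, and this is exactly where the $n_z=0,1$ columns and the $U^{-1}$-shift in Figure~\ref{CF^+} do the work.
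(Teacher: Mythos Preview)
Your strategy---leading-order analysis of $\partial$ via $\omega(D)=\omega(D')\ll\omega(\text{other regions})$ followed by the Euler characteristic---is exactly the paper's, but two things go wrong in the execution. First, you have the survivors backwards: since $n_z(D)=1$, the disk $D$ contributes $\partial^+[b_j,i]=T^{\omega(D)}[b_j',i-1]+\cdots$ only for $i\ge 1$, so the candidates that escape this mechanism are the classes $[b_j,0]\in B$, not ``the $b_j'U^{-1}$ images'' (those are boundaries, and $B'$ sits in the even part, which turns out to vanish). The paper makes this precise by splitting $Odd=M\oplus N$ with $M=B+A'\cdot(1+U^{-1}+\cdots)$ and $N=B\cdot(U^{-1}+U^{-2}+\cdots)$, then showing $Ker\cap N=0$ and $\widetilde{Im}+B=M$ by the lowest-$T$-exponent argument; this yields $\rank HF^+_{odd}\le\binom{2g-2}{d}$, and $\chi$ then forces equality with $HF^+_{even}=0$. (Your aside about differentials ``B to A', both odd'' cannot occur: $\partial$ changes $\Z/2\Z$-parity.)

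The substantive gap is the $\A[U]$-module structure, which you wave off as bookkeeping. Since $U$ has even degree, any summand $\A[U]/U^{m}$ lies entirely in one $\Z/2$-parity; hence knowing $\rank HF^+=\binom{2g-2}{d}$ concentrated in odd degree is compatible with \emph{every} partition $\sum m_i=\binom{2g-2}{d}$, and the Euler characteristic adds no new constraint. The paper closes this by a genuine geometric input: inspecting the periodic domains in Figure~\ref{Sigma2} one checks that there is no index-$1$ disk from Class~B to Class~B' with $n_z=0$. Consequently, for $\tilde x\in B$ at level $0$ the $B'$-component of $\partial\tilde x$ vanishes; writing a homology generator as $x=\tilde x+y$ with $\tilde x\in B$, $y\in N$, the equation $\partial x=0$ then forces the $B'$-component of $\partial y$ to vanish, and the same lowest-$T$-exponent argument used for $Ker\cap N=0$ gives $y=0$. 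Thus the generators lie in $B$ itself, so $U\cdot x=0$ and each summand is exactly $\A[U]/U$. Without this step the leading-order picture in Figure~\ref{CF^+} cannot distinguish $(\A[U]/U)^{\binom{2g-2}{d}}$ from, say, $\A[U]/U^{\binom{2g-2}{d}}$.
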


\begin{proof}
 
Use $A$, $A'$, $B$ and $B'$ to denote the vector spaces generated by Class A,A',B and B' generators respectively, and define \\
``$Odd$'':= $(B+A')\cdot (1+U^{-1}+U^{-2}+\cdots)$, \\
``$Even$'':= $(A+B')\cdot (1+U^{-1}+U^{-2}+\cdots)$,\\
``$M$'':= $B+A'\cdot(1+U^{-1}+U^{-2}+\cdots)$,\\
``$N$'':= $B\cdot(U^{-1}+U^{-2}+\cdots)$,\\
``$Ker$'':= Kernel of the boundary map $Odd\longrightarrow Even$, \\
``$Im$'':= Image of the boundary map $Even \longrightarrow Odd$, \\
``$\widetilde{Ker}$'':= projection of $Ker$ into $M$. \\
``$\widetilde{Im}$'': projection of $Im$ into $M$.

\begin{itemize}
 \item 

$Odd=M\oplus N$, $ Im \subset Ker \subset Odd$. 

\item  $Ker \cap N =0$.  

 Write elements of $N$ in the most general form $x=\sum b_iU^{-j} k_{ij} $, where $k_{ij}\in \A$.  Suppose $k_{i_1 j_1}$ is one of the coefficients with the lowest order term in $T$, then 
$$\partial x = b'_{i_1}U^{-(j_1-1)} \cdot (k_{i_1j_1}T^{\omega(D)}+\text{higher order terms in T})+\cdots .$$
But $\partial x=0$ if $x \in Ker$, which is not possible unless $x=0$.  

Hence, all information of $Ker$ is contained in $\widetilde{Ker}$, so we can restrict our attention to $\widetilde{Ker}$; same for $Im$ and $\widetilde{Im}$.

\item $\widetilde{Im} + B=M \supset \widetilde{Ker}$ 

Compute the determinant of the $\binom{2g-2}{d-1} \times \binom{2g-2}{d-1}$ $\partial$-matrix from $A$ to $A'$.  There is a unique lowest order term $T^{\binom{2g-2}{d-1}\cdot \omega(D')}$ in the determinant, hence nonzero; so the map is surjective.  Same argument carries on for larger spaces $A(1+U^{-1}+\cdots+U^{-k})$, and the map is surjective onto $A'(1+U^{-1}+\cdots+U^{-k})$.  Let $k \rightarrow \infty$, we proved $\widetilde{Im} + B=M \supset \widetilde{Ker}$.

\item
Therefore, $\rank(HF^+_{odd})\leq \rank B =\binom{2g-2}{d}$.  But $\chi(HF^+)=\binom{2g-2}{d}$, we must have $$\rank(HF^+_{odd}) =\binom{2g-2}{d}, \rank(HF^+_{even})=0.$$

\item
As shown above, we can choose a set of generators $x_1,\cdots, x_{\binom{2g-2}{d}} \in B \oplus N$ for $HF^+$. We want to prove $x_i$ in fact lies in $B$.  This would imply $x_i\cdot U =0$, finishing the proof $HF^+(\Sigma_g \times S^1, k; \omega)=(\A[U]/U)^{\binom{2g-2}{d}}$.

Up to this point, we haven't used any information of the boundary map in this special Heegaard Diagram.  Here is the place we have to use a little: upon investigating Figure \ref{Sigma2}, writing out all $k$-renormalizable periodic domain and finding out all possible topological disks with Maslov index 1, we find that there is no holomorphic disk connecting generators from Class B to B' with $n_z=0$.  In other words, the boundary map $\partial$ restricting to $B$ and $B'$ is zero.  Write $x_i=\widetilde{x_i}+y_i$, where $\widetilde{x_i}\in B$ and $y_i \in N$.  Then, $$0=\partial(x_i)=\partial(\widetilde{x_i})+\partial(y_i)=\partial(y_i)$$ But we know $Ker \cap N =0$, so $y_i=0$.  

\end{itemize}

\end{proof}

\textit{Step 2:} Since $\Sigma_g$ has a large symmetric group, the perturbed floer homology group is in some sense not sensitive to the exact direction of perturbations.  More precisely:

\begin{lemma}
\label{Sigma_g}

For any nonzero $\eta \in H^1(\Sigma_g; \Z)$, we have $HF^+(\Sigma_g \times S^1,k ; \eta)=HF^+(\Sigma_g\times S^1, k; \omega)$ and $\HF(\Sigma_g \times S^1,k ; \eta)=\HF(\Sigma_g\times S^1, k; \omega)$ as $\A$-vector spaces, for $k\neq 0$.  

\end{lemma}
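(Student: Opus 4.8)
The plan is to mimic the proof of Theorem \ref{T^3}, replacing the automorphism group $SL_3(\Z)$ of $T^3$ by the action of the mapping class group of $\Sigma_g$ on $H^1(\Sigma_g;\Z)$. The guiding idea is that every nonzero class in $H^1(\Sigma_g;\Z)$ can be carried, by a self-diffeomorphism of $\Sigma_g\times S^1$, to one convenient ``effectively generic'' class for which Proposition \ref{biglemma} already applies; functoriality of perturbed Floer homology under diffeomorphisms then transports the computation back to $\eta$.

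First I would produce a convenient rational class $\eta'\in H^1(\Sigma_g;\Q)$ that is effectively generic for the diagram of Figure \ref{Sigma2}, exactly as $\eta'$ is produced in the proof of Theorem \ref{T^3}. The computation underlying Proposition \ref{biglemma} involves only finitely many periodic-domain differences $\phi$: those distinguishing the lowest-order term of each determinant of a $\partial$-matrix from $A$ to $A'$ (and its $U$-translates), together with those governing the map out of $N$. Requiring $\eta'(\phi)\neq 0$ with the sign that keeps the diagonal contribution $T^{\binom{2g-2}{d-1}\eta'(D')}$ strictly lowest imposes finitely many open (non-vanishing and positivity) conditions, which cut out a nonempty rational cone in $H^1(\Sigma_g;\R)$. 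Choosing a primitive integral $\eta'$ in this cone and rerunning the argument of Proposition \ref{biglemma} verbatim gives $HF^+(\Sigma_g\times S^1,k;\eta')=(\A[U]/U)^{\binom{2g-2}{d}}=HF^+(\Sigma_g\times S^1,k;\omega)$, and likewise $\HF(\Sigma_g\times S^1,k;\eta')=\HF(\Sigma_g\times S^1,k;\omega)$.

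Next I would transport an arbitrary nonzero $\eta$ to this $\eta'$. The action of $\mathrm{Mod}(\Sigma_g)$ on $H^1(\Sigma_g;\Z)\cong\Z^{2g}$ has image the full symplectic group $Sp(2g,\Z)$, which acts transitively on primitive vectors. Given nonzero $\eta$, I would first pass to its primitive representative—legitimate because, by the Remark following Proposition \ref{main prop}, the homology depends only on $\Ker\eta\cap PD$ and is therefore unchanged under scaling—and then select $\varphi\in\mathrm{Mod}(\Sigma_g)$ with $\varphi^*(\text{primitive }\eta)=\eta'$. The self-diffeomorphism $\varphi\times\id$ of $\Sigma_g\times S^1$ is orientation-preserving and fixes the fiber class $[\Sigma_g]$, hence preserves the Spin$^c$ grading $k$ (and the Spin$^c$ structure $\s$, whose $c_1$ has no $H^1(\Sigma_g)\otimes H^1(S^1)$ component) as well as the $U$-module structure, while carrying the perturbation $\eta$ to $\eta'$. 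Functoriality then yields $HF^+(\Sigma_g\times S^1,k;\eta)\cong HF^+(\Sigma_g\times S^1,k;\eta')$, and the same for $\HF$; combined with the previous step this proves the lemma.

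The main obstacle is the first of these steps: confirming that a rational class can be made ``as good as generic'' for this particular diagram. One must verify that the open cone of admissible $\eta'$ is genuinely nonempty—that the positivity conditions forcing the diagonal term to dominate (and keeping the non-admissible diagram usable, i.e.\ $\eta'(\D_i)>0$) are mutually consistent—and that no \emph{further} periodic domains silently enter the computation of Proposition \ref{biglemma}. Once this finiteness and consistency are granted, the surjectivity of the symplectic representation together with its transitivity on primitive vectors, and the functoriality of perturbed Floer homology, are routine.
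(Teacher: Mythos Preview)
Your overall strategy matches the paper's exactly: produce an ``effectively generic'' class $\eta'$ with codimension-one kernel, then use the surjection $\mathrm{Mod}(\Sigma_g)\twoheadrightarrow Sp(2g,\Z)$ and transitivity of $Sp(2g,\Z)$ (on primitive vectors, equivalently on rational hyperplanes) to carry an arbitrary nonzero $\eta$ to $\eta'$ via a diffeomorphism $\varphi\times\id$, and conclude by functoriality. The paper phrases the transport step in terms of hyperplanes $\Ker\eta$ rather than primitive vectors, but this is the same statement in dual form.

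The one place you make life harder than necessary is your step~1. You try to locate $\eta'$ on the \emph{non-admissible} diagram of Figure~\ref{Sigma2}, which forces you to worry about positivity conditions (so that the diagram is usable at all), about whether the diagonal term in each determinant remains strictly lowest, and about whether finitely many inequalities really suffice---exactly the ``main obstacle'' you flag at the end. The paper avoids all of this by following the $T^3$ argument literally: work on an \emph{admissible} diagram, so that the boundary matrix $M$ has entries in $\Z_2[H^1(Y;\Z)]$ and every minor's determinant is a finite sum. Then only finitely many periodic domains $\phi$ occur as exponent differences; choose a rational hyperplane $H'\subset H^1(\Sigma_g;\Q)$ missing them all, and any $\eta'$ with $\Ker\eta'=H'$ automatically satisfies $\rank M(\eta')=\rank M(\omega)$, hence $HF^\circ(\Sigma_g\times S^1,k;\eta')=HF^\circ(\Sigma_g\times S^1,k;\omega)$ by the mechanism of Propositions~\ref{main prop} and~\ref{main prop+}. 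No positivity cone or consistency check is required. So the obstacle you identify is real for the route you chose, but it vanishes once you move to an admissible diagram for this single step.
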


\begin{proof}

The proof goes parallel to that of $T^3$: Find a special two form $\eta'$ with
$HF^\circ(\Sigma_g\times S^1; \eta')=HF^\circ(\Sigma_g \times S^1; \omega)$ and $\Ker(\eta')$ a
hyperplane $H'$ of $H^1(\Sigma_g; \Q)$.  Suppose the kernel of $\eta$ is another hyperplane $H$, it's possible to find some element in $Sp(2g;
\Z)$ that maps $H$ to $H'$.  On the other hand, a standard result in Mapping Class group implies that any element in $Sp(2g; \Z)$ is induced by some elements of the mapping class group $Mod_g$.  Functoriality of $HF^\circ$ finishes the proof.   

\end{proof}

\begin{proof}[ proof of Theorem \ref {big}]

Apply Lemma \ref{Sigma_g} and Proposition \ref{biglemma}, we have as $A$-vector space:
$$HF^+ (\Sigma_g \times S^1,k ; \eta)=\A^{\binom{2g-2}{d}},\, \HF (\Sigma_g \times S^1,k ; \eta)=\A^{2\binom{2g-2}{d}}.$$
On the other hand, as $A[U]$-module, $HF^+ (\Sigma_g \times S^1,k ; \eta)$ must have the general  $\A[U]/U^{k_1} \oplus \cdots \oplus \A[U]/U^{k_n}$.  So by consideration on rank, we must have $HF^+ (\Sigma_g \times S^1,k ; \eta)=(\A[U]/U)^{\binom{2g-2}{d}} $.

\end{proof}





\begin{thebibliography}{H}

\bibitem{AiN}{\bf Y Ai, Y Ni}, {\it Two applications of twisted Floer homology}, preprint, available at arXiv:0809.0622 

\bibitem{AiP}{\bf Y Ai, T Peters}, {\it The twisted Floer homology of torus bundles}, preprint, available at arXiv:0806.3487

\bibitem{JM}{\bf S Jabuka, T Mark}, {\it Product formulae for Ozsv\'ath--Szab\'o $4$--manifolds invariants},
to appear in Geom. Topol., available at arXiv:0706.0339



\bibitem{OSzAnn1}{\bf P Ozsv\'ath, Z Szab\'o}, {\it Holomorphic disks and topological invariants for closed three-manifolds },
Ann. of Math.(2), 159 (2004), no. 3, 1027--1158


\bibitem{OSzAnn2}{\bf P Ozsv\'ath, Z Szab\'o}, {\it Holomorphic disks and  three-manifold invariants: properties and applications},
Ann. of Math.(2), 159 (2004), no. 3, 1159--1245


\bibitem{OSzSympl}{\bf P Ozsv\'ath, Z Szab\'o}, {\it Holomorphic triangle invariants and the topology of
symplectic four-manifolds}, Duke Math. J. 121 (2004), no. 1, 1--34

\bibitem{OSzAbsGr}{\bf P Ozsv\'ath, Z Szab\'o}, {\it Absolutely Graded Floer homologies and intersection
forms for four-manifolds with boundary}, Adv. Math. 173 (2003),
no. 2, 179--261

\bibitem{OSzKnot}{\bf P Ozsv\'ath, Z Szab\'o}, {\it Holomorphic disks and knot invariants},
Adv. Math. 186 (2004), no. 1, 58--116

\bibitem{OSzCont}{\bf P Ozsv\'ath, Z Szab\'o}, {\it Heegaard Floer homology and contact structures},
Duke Math. J. 129 (2005), no. 1, 39--61.

\bibitem{OSzGenus}{\bf P Ozsv\'ath, Z Szab\'o}, {\it Holomorphic disks and genus bounds}, Geom. Topol. 8 (2004),
311--334 (electronic)

\bibitem{OSzDouble}{\bf P Ozsv\'ath, Z Szab\'o}, {\it On the Heegaard Floer homology of branched double-covers}. Adv. Math. 194 (2005), no. 1, 1--33


\bibitem{OSzFour}{\bf P Ozsv\'ath, Z Szab\'o}, {\it Holomorphic triangles and invariants for smooth four-manifolds},
Adv. Math. 202 (2006), no. 2, 326--400.

\bibitem{OSzIntSurg}{\bf P Ozsv\'ath, Z Szab\'o}, {\it Knot Floer homology and integer surgeries},
Algebr. Geom. Topol. 8 (2008), 101--153 (electronic)

\bibitem{Ra}{\bf J Rasmussen}, {\it Floer homology and knot complements}, PhD Thesis, Harvard University (2003),
available at arXiv:math.GT/0306378


\end{thebibliography}
\end{document}